\newtheoremstyle{mytheorem}{5pt plus 5pt minus 3pt}{4pt plus 3pt minus 1.5pt}
	{\itshape}{}{\bfseries}{.}{1ex plus 1ex minus .5ex}{}
\newtheoremstyle{mydef}{5pt plus 5pt minus 3pt}{4pt plus 3pt minus 1.5pt}
	{}{0pt}{\bfseries}{.}{1ex plus 1ex minus .5ex}{}
\newtheoremstyle{myremark}{5pt plus 5pt minus 3pt}{4pt plus 3pt minus 1.5pt}
	{}{0pt}{\itshape}{.}{1ex plus 1ex minus .5ex}{}
\theoremstyle{mytheorem}
\newtheorem{lemma}{Lemma}[section]
\newtheorem{theorem}{Theorem}[section]
\newtheorem{cor}{Corollary}
\newtheorem{rem}{Remark}
\theoremstyle{mydef}
\theoremstyle{myremark}
\newcommand{\bc}{\mathbf c}
\newcommand{\bq}{\mathbf q}
\newcommand{\D}{\mathbf D}
\newcommand{\Q}{\mathcal Q}
\newcommand{\tf}{\scriptsize}
\def\Acaption#1#2{\caption{#2}\vspace*{-#1}}
\definecolor{Mgreen}{RGB}{34,139,34}
\definecolor{blau}{rgb}{0.15,0.2,0.5}
\definecolor{gray}{rgb}{0.5,0.5,0.5}
\definecolor{drot}{rgb}{0.7,0,0.1}
\definecolor{gelb}{rgb}{.55,.40,.1}
\definecolor{magenta}{rgb}{1.,0.,1.}
\definecolor{cyan}{rgb}{0.,1.,1.}
\definecolor{green}{rgb}{0.,1.,0.}
\definecolor{Morange}{rgb}{1.,0.5,0.}
\begin{document}

\begin{frontmatter}
\title{Gaussian quadrature rules for $C^1$ quintic splines}

\author[NumPor]{Michael Barto\v{n}\corref{cor1}}
\ead{Michael.Barton@kaust.edu.sa}
\author[VCC]{Rachid Ait-Haddou}
\ead{rachid.aithaddou@kaust.edu.sa}
\author[NumPor]{Victor Manuel Calo}
\ead{Victor.Calo@kaust.edu.sa}

\cortext[cor1]{Corresponding author}

\address[NumPor]{Numerical Porous Media Center, King Abdullah University of Science and Technology, Thuwal 23955-6900, Kingdom of Saudi Arabia}
\address[VCC]{Visual Computing Center, King Abdullah University of Science and Technology, \\ Thuwal 23955-6900, Kingdom of Saudi Arabia}

\begin{abstract}
We provide explicit expressions for quadrature rules
on the space of $C^1$ quintic splines with uniform knot sequences over finite domains.
The quadrature nodes and weights are derived via an explicit recursion
that avoids an intervention of any numerical solver and the rule is optimal, that is,
it requires the minimal number of nodes. For each of $n$ subintervals, generically, only two nodes
are required which reduces the evaluation cost by $2/3$ when compared to the classical Gaussian
quadrature for polynomials. Numerical experiments show fast convergence,
as $n$ grows, to the ``two-third'' quadrature rule of Hughes et al. \cite{Hughes-2010} for infinite domains.
\end{abstract}

\begin{keyword}
Gaussian quadrature, quintic splines, Peano kernel, B-splines
\end{keyword}

\end{frontmatter}


\section{Introduction}\label{intro}

Numerical quadrature has been of interest for decades due to its
wide applicability in many fields spanning collocation methods \cite{Sloan-1988},
integral equations \cite{Atkinson-1976}, finite elements methods \cite{Solin-2003}
and most recently, isogeometric analysis \cite{Cottrell-2009}. It is also a preferable
tool for high-speed solution frameworks \cite{petiga,PetigaPaper-2013} as it is computationally very cheap and robust
when compared to classical integration methods \cite{Gautschi-1997}.

A quadrature rule, or shortly a quadrature, is said to be an \textit{$m$-point rule},
if $m$ evaluations of a function $f$ are needed to approximate its weighted integral
over an interval $[a,b]$
\begin{equation}\label{eq:GaussQuad}
\int_a^b \omega(x) f(x) \, \mathrm{d}x = \sum_{i=1}^{m} \omega_i f(\tau_i) + R_{m}(f),
\end{equation}
where $\omega$ is a fixed non-negative \emph{weight function} defined over $[a,b]$.
Typically, the rule is required to be \emph{exact}, that is, $R_m(f) \equiv 0$
for each element of a predefined linear function space $\mathcal{L}$.
Moreover, the rule is said to be \emph{optimal} if $m$ is the \emph{minimal} number of
\textit{nodes} $\tau_i$ at which $f$ has to be evaluated.

In the literature, the term \emph{optimality} may also refer to the approximation error that
the quadrature rule produces. That is, the number of nodes is given and
their layout is sought such that it minimizes the error for a given class of
functions. K\"{o}hler and Nikolov \cite{NikolovKoehler-1995,NikolovKoehler2-1995} showed that
the Gauss-type quadrature formulae associated with spaces of spline functions
with equidistant knots are asymptotically optimal in non-periodic Sobolev
classes. This is a motivation for studying Gauss-type quadrature formulae for
spaces of spline functions, in particular, with equidistant knots.
In this paper, by \emph{optimal} we exclusively mean quadrature rules with the minimal number of nodes.

In the case when $\mathcal{L}$ is the linear space of polynomials of degree at most $2m-1$,
then the $m$-point Gaussian quadrature rule \cite{Gautschi-1997} is both exact and optimal.
The Gaussian nodes are the roots of the orthogonal polynomial $\pi_{m}$ where
$(\pi_0,\pi_1,\ldots,\pi_m,\ldots )$ is the sequence of orthogonal polynomials with
respect to the measure $\mu(x) = \omega(x)dx$.
Typically, the nodes of the Gaussian quadrature rule
have to be computed numerically.
which becomes expensive, especially for high-degree polynomials.

Naturally, getting more degrees of freedom is not achieved by using a higher polynomial
degree, but rather by using polynomial pieces, i.e., \emph{splines}.
Additionally to the polynomial case, the interval of interest is provided
by a non-decreasing sequence of points known as a \emph{knot sequence} (or vector), points
where the resulting spline is considered to have a lower continuity.
The knot sequence defines the \emph{local support} of each basis function, that is, each one acts only
locally on a particular subinterval of the domain, and--depending on the knots multiplicities--
spans a particular number of knots.
We refer to \cite{deBoor-1972,Elber-2001,Hoschek-2002-CAGD} for a detailed introduction to splines.

Regarding the quadrature rules for splines, Micchelli and Pinkus \cite{Micchelli-1977}
derived the optimal number of quadrature nodes and specified the range of intervals,
the knot sequence subintervals, that contain at least one node.
There are two main difficulties compared to the polynomial case:
firstly, the optimal quadrature rule is not in general guaranteed to be unique, e.g.,
when the boundary constraints are involved,
and, secondly, \cite{Micchelli-1977} determines
only a \emph{range} of intervals, i.e., each node has several potential subintervals to lie in.
The latter issue is crucial as one cannot apply even expensive numerical solvers,
because the \emph{algebraic} system to solve is not known. For each assumed layout of nodes, one
would have to solve a particular algebraic system using e.g., \cite{SolverGershon-2001,Patrikalasis-1993,SolverET-2012}.
However, the number of eventual systems
grows exponentially in the number of subintervals and therefore such an approach is not feasible.
Instead, theorems that derive exact layouts of the nodes are essential. Our work in this paper
contributes such a theoretical result for a particular space of splines.

The quadrature rules for splines differ depending on the particular space of interest $S_{d,c}$, where $d$ is the \emph{degree} and $c$ refers to
\emph{continuity}. For cases with lower continuity, the ``interaction'' between polynomial pieces is lower and hence, naturally,
a higher number of nodes is required for the optimal quadrature rule.
The choice of the domain can bring a significant simplification.
Whilst there are few rules that are exact and optimal over a finite domain,
their counterparts are known when the integration domain is the whole \emph{real line} \cite{Hughes-2010}.
The \emph{half-point rules} of Hughes et al. are independent of the polynomial degree and the ``half'' indicates
that the number of quadrature points is roughly half the number of basis functions.

The half-point rules can be altered even for spaces with lower continuity, e.g., for $S_{4,1}$
an optimal rule was also derived in \cite{Hughes-2010}.
The rule is called a ``two-third rule'' as it requires only two evaluations per subinterval
whilst the classical Gaussian rule for polynomials needs three nodes.
However, these rules are exact only over the real line.
Because in most applications a \emph{finite domain} is needed,
additional nodes have to be added to satisfy the boundary constraints and a numerical solver
has to be employed \cite{Hughes-2012}. We emphasize that we focus here only on optimal rules
as there are many schemes that introduce redundant nodes in order to overcome the problem with a finite interval,
see \cite{Hughes-2012} and the references cited therein.

Regarding optimal rules over finite domains,
Nikolov \cite{Nikolov-1996} proved the unique layout of nodes of the quadrature rule for $S_{3,1}$ with
uniform knot sequences and derived a recursive algorithm that computes the nodes and weights in
a closed form. Recently \cite{Quadrature31-2014}, the result was generalized for $S_{3,1}$ over
a special class of \emph{non-uniform} knot sequences, called symmetrically-stretched.
The rules possess the three desired properties, i.e., they are exact, optimal and act in a closed form fashion,
without intervention of any numerical solver.

We emphasize that the computation of the nodes and weights of the optimal spline
quadrature, also called \emph{Gaussian}, is rather a challenging problem as
one has to first derive the \emph{correct layout} of the nodes and then, typically, to solve
non-linear systems of algebraic equations. For higher degrees, the use of a numerical solver
seems to be unavoidable.

In this work, we derive a quadrature rule
for spaces of $C^1$ quintic splines, $S_{5,1}$,
with uniform knot sequences over finite domains.
The rule is exact, optimal, and--even though the degree is five--explicit.
We also show numerically, when the number of subintervals grows,
that the rule rapidly converges to the ``two-third'' rule of Hughes \cite{Hughes-2010}.

The rest of the paper is organized as follows.
In Section~\ref{sec:quad}, we recall some basic properties of $S_{5,1}$
and derive their Gaussian quadrature rules. In Section~\ref{sec:err}, the error estimates are given
and Section~\ref{sec:ex} shows some numerical experiments that validate the theory proposed in this work. Finally,
possible extensions of our method are discussed in Section~\ref{sec:con}.

\section{Gaussian quadrature formulae for $C^1$ quintic splines}\label{sec:quad}


In this section we state a few basic properties of $S_{5,1}$ splines and derive
explicit formulae for computing quadrature nodes and weights for spline spaces with uniform knot sequences over a finite domain.
Throughout the paper, $\pi_{d}$ denotes the linear space of polynomials
of degree at most $d$ and $[a,b]$ is a non-trivial real \textit{compact} interval.

\subsection{$C^1$ quintic splines with uniform knot sequences}\label{ssec:spline}

We detail several properties of spline basis functions. We
consider a uniform partition $\mathcal{X}_n = (a=x_0,x_1,...,x_{n-1},x_{n} = b)$ of the interval $[a,b]$
with $n$ subintervals and define $h := \frac{1}{n} = x_{k} -  x_{k-1}$ for all $k=1, \dots, n$.
We denote by $S^{n}_{5,1}$ the linear space of $C^1$ quintic splines over a uniform
knot sequence $\mathcal{X}_n =(a=x_0,x_1,...,x_n=b)$
\begin{equation}\label{eq:family}
S^{n}_{5,1} = \{ f\in C^{1}[a,b]: f|_{(x_{k-1},x_{k})} \in \pi_5, k=1,...,n\}.
\end{equation}
The dimension of the space $S^{n}_{5,1}$ is $4n+2$.
\begin{rem}
In the B-spline literature \cite{deBoor-1972,Hoschek-2002-CAGD,Elber-2001}, the knot sequence is
usually written with knots' multiplicities.
However, in the isogeometric analysis literature, see e.g., \cite{Adel-2014,Buffa-2010},
the knot vector is usually split into a vector carrying the partition of the interval
and a vector containing continuity information (knot multiplicity).
As in this paper the multiplicity is
always four at every knot, we follow the latter notation and, throughout the paper,
write $\mathcal{X}_n$ without multiplicity, i.e., $x_k<x_{k+1}$, $k=0,\dots, n-1$.
\end{rem}

 \begin{figure*}
 \hfill
 \begin{overpic}[width=.69\columnwidth]{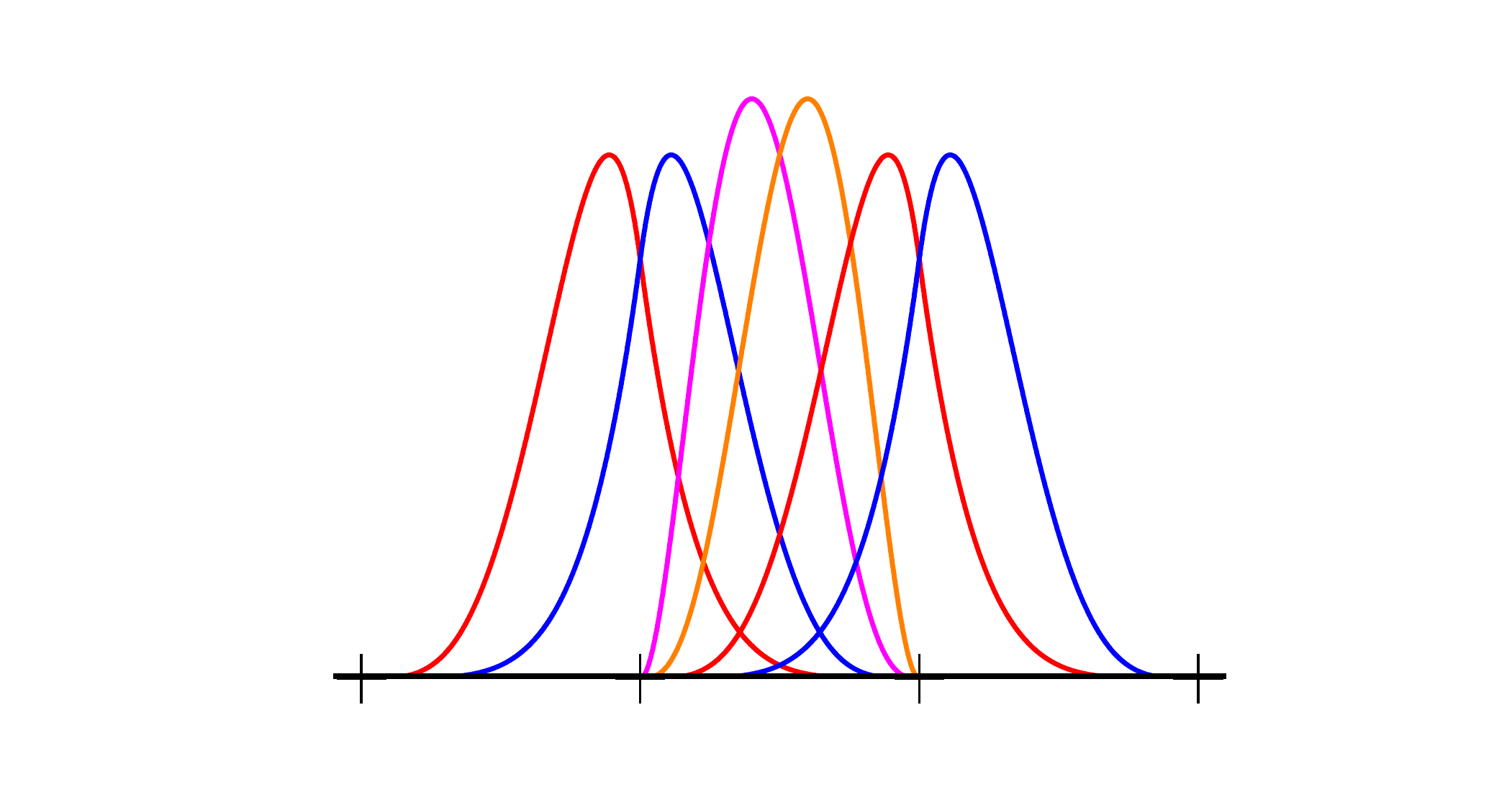}
    \put(20,3){$x_{k-2}$}
    \put(36,3){$x_{k-1}$}
    \put(60,3){$x_{k}$}
    \put(75,3){$x_{k+1}$}
    \put(20,16){\color{red}{$D_{4k-3}$}}
    \put(70,20){\color{blue}{$D_{4k+2}$}}
    \put(36,45){\color{magenta}{$D_{4k-1}$}}
    \put(55,45){\color{Morange}{$D_{4k}$}}
  \end{overpic}
 \hfill
 \vspace{-4pt}
 \Acaption{1ex}{Four consecutive knots $x_{k-2}, \dots, x_{k+1}$ of a uniform knot sequence, each of multiplicity four. Six non-normalized spline
 basis functions $D_{4k-3}, \dots, D_{4k+2}$ with non-zero support on $[x_{k-1},x_{k}]$ are displayed.}\label{fig:C1Splines}
 \end{figure*}

Similarly to \cite{Nikolov-1996,Quadrature31-2014}, we find it convenient to work with the
non-normalized B-spline basis. To define the basis, we extend our knot sequence
$\mathcal{X}_n$ with two extra knots outside the interval $[a,b]$ in a uniform fashion
\begin{equation}\label{boundary_condition}
x_{-1} = x_{0}-h
\quad \textnormal{and} \quad
x_{n+1} =x_{n}+h.
\end{equation}
The choice of $x_{-1}$ and $x_{n+1}$ allows us to
simplify expressions in Section~\ref{ssec:quad}, but this setting does not affect the quadrature
rule derived later in Theorem~\ref{theo:Quadrature}.
We follow \cite{Schoenberg-1966} and denote by $\D = \{D_i\}_{i=1}^{4n+2}$ the basis of $S^{n}_{5,1}$ where
\begin{equation}\label{basis1}
\begin{split}
D_{4k-3}(t) & = [x_{k-2},x_{k-2},x_{k-1},x_{k-1},x_{k-1},x_{k-1},x_{k}](.-t)_{+}^5, \\
D_{4k-2}(t) & = [x_{k-2},x_{k-1},x_{k-1},x_{k-1},x_{k-1},x_{k},x_{k}](.-t)_{+}^5, \\
D_{4k-1}(t) & = [x_{k-1},x_{k-1},x_{k-1},x_{k-1},x_{k},x_{k},x_{k}](.-t)_{+}^5, \\
D_{4k}(t) & = [x_{k-1},x_{k-1},x_{k-1},x_{k},x_{k},x_{k},x_{k}](.-t)_{+}^5.
\end{split}
\end{equation}
where $[.]f$ stands for the divided difference and $u_{+} = \max(u,0)$ is the truncated
power function.
The direct computation of the divided differences gives the following explicit expressions
for $t \in [x_{k-2},x_{k-1}]$
\begin{equation}\label{basis2a}
\renewcommand{\arraystretch}{1.4}
\begin{array}{rcl}
D_{4k-3}(t) & = & \frac{(t - x_{k-2})^4(x_{k} + 8x_{k-1} - 9t)}{4h^6}, \\
D_{4k-2}(t) & = & \frac{(t - x_{k-2})^5}{4h^6},
\end{array}
\end{equation}
and for $t \in [x_{k-1},x_{k}]$
\begin{equation}\label{basis2b}
\renewcommand{\arraystretch}{1.4}
\begin{array}{rcl}
D_{4k-3}(t) & = & \frac{(x_{k}-t)^5}{4h^6}, \\
D_{4k-2}(t) & = & \frac{(x_{k}-t)^4(x_{k-2} + 8x_{k-1} - 9t)}{4h^6}, \\
D_{4k-1}(t) & = & \frac{10(t - x_{k-1})^2 (x_{k}-t)^3}{h^6},\\
D_{4k}(t)   & = & \frac{10(t - x_{k-1})^3 (x_{k}-t)^2}{h^6}.
\end{array}
\end{equation}
The functions have the following pattern: six basis functions
$D_{4k-3}, \dots,$ $D_{4k+2}$ have non-zero support on $[x_{k-1},x_k]$, moreover, two
of them, $D_{4k-1}$ and $D_{4k}$, act only $[x_{k-1},x_k]$ and are scaled Bernstein basis functions,
see (\ref{basis2b}) and Fig.~\ref{fig:C1Splines}.

Among the basic properties of the
basis $\D$, we need to recall the fact that
$D_{4k+2}(t) \leq D_{4k+1}(t)$ for $t\in[x_{k-1},x_k]$ for $k=1,\ldots,n$
and are equal at the knot $x_k$, that is,
$D_{4k+1}(x_{k}) = D_{4k+2}(x_{k}) = \frac{1}{4h}$. Moreover, we have that
$D_{4k-1}(\frac{x_{k-1}+x_k}{2}) = D_{4k}(\frac{x_{k-1}+x_k}{2}) = \frac{5}{16h}$.
From (\ref{basis2a}) and (\ref{basis2b}), the integrals of the basis functions are computed
\begin{equation}\label{interiorIntegral}
I[D_k] = \frac{1}{6}\; \textnormal{for} \quad k = 3,4,\ldots,4n,
\end{equation}
where $I[f]$ stands for the integral of $f$ over the interval $[a,b]$.
The first and the last two integrals are
\begin{equation}\label{boundaryIntegral}
I[D_1] =  I[D_{4n+2}] = \frac{1}{24}
\quad \textnormal{and} \quad
I[D_2] =  I[D_{4n+1}]= \frac{1}{8}.
\end{equation}

 \begin{figure*}[!tb]
 \hfill
 \begin{overpic}[width=.69\columnwidth]{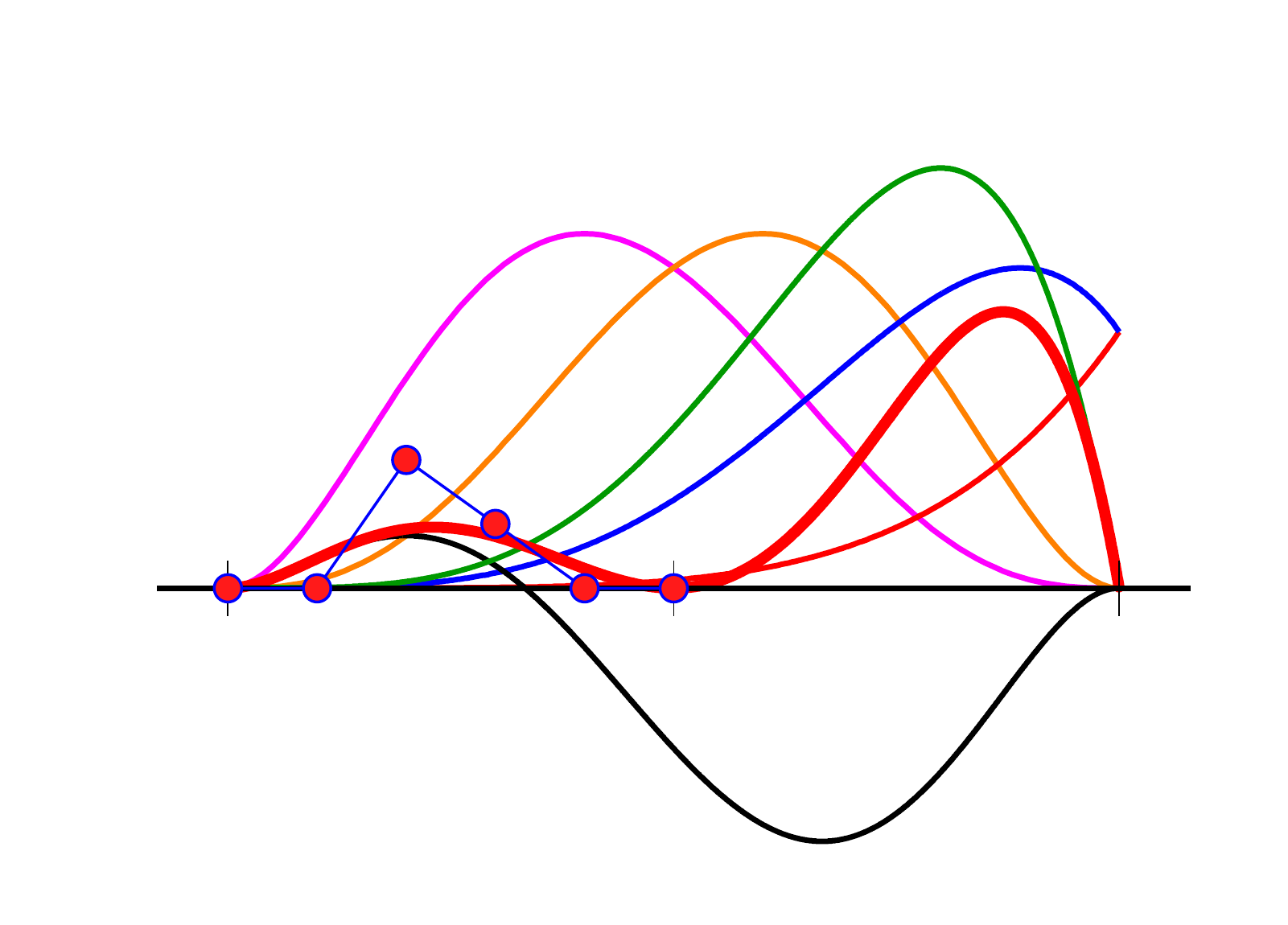}
    \put(15,20){$x_{k-1}$}
    \put(51,20){$\frac{x_{k-1}+x_{k}}{2}$}
    \put(88,20){$x_{k}$}
    \put(0,37){$[x_{k-1}+\frac{2}{10}h,\frac{1}{8h}]$}
    \put(57,33){\color{red}{$P_k$}}
    \put(60,60){\color{Mgreen}{$2D_{4k+1}-2D_{4k+2}$}}
    \put(82,52){\color{blue}{$D_{4k+2}$}}
    \put(25,50){\color{magenta}{$D_{4k-1}$}}
    \put(55,55){\color{Morange}{$D_{4k}$}}
    \put(72,7){$\frac{1}{2}D_{4k-1}-D_{4k}$}
  \end{overpic}
 \hfill
 \vspace{-4pt}
 \Acaption{1ex}{The linear blend of basis functions (\ref{blend}), $P_k$, is expressed as a B\'{e}zier curve on $[x_{k-1},\frac{x_{k-1}+x_{k}}{2}]$
 with a control point sequence (red dots) with non-negative $y$-coordinates (\ref{eq:P1}). Consequently, $P_k$ is non-negative
 on $[x_{k-1},x_{k}]$ and has a single root (of multiplicity two) at $\frac{x_{k-1}+x_{k}}{2}$.}\label{fig:Lemma1}
 \end{figure*}

A less obvious property that binds together four consecutive basis functions,
which is used later for our quadrature rule in Section~\ref{ssec:quad},
is formalized as follows
\begin{lemma}\label{lemmaBlend}
Let $\mathcal{X}_{n} = (a=x_0,x_1,...,x_n=b)$, be a uniform knot sequence and
for any $k = 1,\dots,n$ define
\begin{equation}\label{blend}
P_k(t) = 2 D_{4k+1}(t) - 2 D_{4k+2}(t) + \frac{1}{2} D_{4k-1}(t) - D_{4k}(t).
\end{equation}
Then $P_k(t)\geq 0$ for any $t \in  (x_{k-1},x_{k})$ and $P_k(t) = 0$ if and only if $t=\frac{x_{k-1}+x_{k}}{2}$.
\end{lemma}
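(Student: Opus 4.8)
The plan is to collapse $P_k$ to an explicit polynomial on the single interval $[x_{k-1},x_k]$ and then factor it. On this interval all four relevant basis functions are available in closed form: $D_{4k-1}$ and $D_{4k}$ are the last two lines of~(\ref{basis2b}), while, by the support pattern, $D_{4k+1}$ and $D_{4k+2}$ are exactly the ``left halves'' $D_{4(k+1)-3}$ and $D_{4(k+1)-2}$ of the basis functions attached to the knot $x_{k+1}$; hence they are given by~(\ref{basis2a}) with $k$ replaced by $k+1$ and with $t\in[x_{k-1},x_k]$. Using uniformity of $\mathcal X_n$, i.e. $x_{k+1}=x_k+h$, the awkward linear factor there simplifies as $x_{k+1}+8x_k-9t=9(x_k-t)+h$.

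Next I would pass to the local coordinate $s:=t-x_{k-1}\in(0,h)$, so that $x_k-t=h-s$, and rewrite the four functions as
\begin{equation*}
D_{4k-1}=\tfrac{10s^2(h-s)^3}{h^6},\quad D_{4k}=\tfrac{10s^3(h-s)^2}{h^6},\quad D_{4k+1}=\tfrac{s^4(10h-9s)}{4h^6},\quad D_{4k+2}=\tfrac{s^5}{4h^6}.
\end{equation*}
Substituting into~(\ref{blend}), the two quintic contributions telescope because $(10h-9s)-s=10(h-s)$, giving $2D_{4k+1}-2D_{4k+2}=5s^4(h-s)/h^6$. Adding $\tfrac12D_{4k-1}-D_{4k}=\big(5s^2(h-s)^3-10s^3(h-s)^2\big)/h^6$ and extracting the common factor $5s^2(h-s)/h^6$ leaves the bracket $s^2-2s(h-s)+(h-s)^2=(2s-h)^2$. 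Back in the original variable this is
\begin{equation*}
P_k(t)=\frac{5}{h^6}\,(t-x_{k-1})^2\,(x_k-t)\,(2t-x_{k-1}-x_k)^2 .
\end{equation*}

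From this factorization the statement is immediate: for $t\in(x_{k-1},x_k)$ the factors $(t-x_{k-1})^2$ and $(x_k-t)$ are strictly positive, and $(2t-x_{k-1}-x_k)^2\ge0$ with equality precisely at $t=\tfrac{x_{k-1}+x_k}{2}$; hence $P_k\ge0$ on $(x_{k-1},x_k)$ and vanishes there only at the midpoint, which is a double root. This is exactly the picture in Fig.~\ref{fig:Lemma1}; equivalently one may expand $P_k$ in the degree-$5$ Bernstein basis on $[x_{k-1},\tfrac{x_{k-1}+x_k}{2}]$ and observe that all control ordinates are nonnegative, but the direct factorization is shorter and also certifies the multiplicity of the root.

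There is no deep obstacle here — the whole argument is a short computation. The one point that genuinely matters is the index bookkeeping together with the uniformity hypothesis: it is precisely $x_{k+1}=x_k+h$ that turns $x_{k+1}+8x_k-9t$ into $9(x_k-t)+h$, lets the two degree-$5$ terms collapse onto the factor $(h-s)$, and produces the perfect square $(2s-h)^2$. For a non-uniform knot sequence this special structure, and hence the lemma as stated, would in general fail.
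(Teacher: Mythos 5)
Your proof is correct, but it takes a different route from the paper. You collapse $P_k$ to the explicit closed-form factorization
\begin{equation*}
P_k(t)=\frac{5}{h^6}\,(t-x_{k-1})^2\,(x_k-t)\,(2t-x_{k-1}-x_k)^2,
\end{equation*}
which I have checked: the local expressions for $D_{4k+1},D_{4k+2}$ obtained from (\ref{basis2a}) with $k\mapsto k+1$ are right (including the simplification $x_{k+1}+8x_k-9t=9(x_k-t)+h$, valid also for $k=n$ thanks to the extended knot (\ref{boundary_condition})), the telescoping $2D_{4k+1}-2D_{4k+2}=5s^4(h-s)/h^6$ is right, and the bracket $s^2-2s(h-s)+(h-s)^2=(2s-h)^2$ closes the computation; the result is consistent with the control points (\ref{eq:P1})--(\ref{eq:P2}) (e.g.\ both give $P_k=\tfrac{15}{256h}$ at $t=x_{k-1}+h/4$). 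The paper instead splits $[x_{k-1},x_k]$ at the midpoint, converts each half to Bernstein form, and reads off nonnegativity and the double root at the junction from the control ordinates (\ref{eq:P1}), (\ref{eq:P2}). Your factorization is shorter, certifies the sign, the uniqueness of the interior zero, and its multiplicity in one stroke, and makes transparent why uniformity of the knots is essential; the paper's Bézier-segment argument buys a geometric picture (Fig.~\ref{fig:Lemma1}) and a technique that generalizes to cases where no clean factorization exists, but for this lemma your direct computation is at least as convincing.
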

\begin{proof}
Over an interval $(x_{k-1},x_{k})$,  the function $P_k$ is a single
quintic polynomial. Therefore, it can be expressed in terms of Bernstein basis and can be
viewed as a B\'ezier curve on a particular domain, see Fig.~\ref{fig:Lemma1}.
Looking at its shape, one cannot conclude non-negativity from the control polygon,
when considered on the whole interval $(x_{k-1},x_{k})$. Hence we define
\begin{equation}\label{eq:Qs}
\renewcommand{\arraystretch}{1.4}
\begin{array}{rcl}
P_{k}^1(t) &  = P_k(t) \quad \textnormal{on}  & [x_{k-1}, \frac{x_{k-1} + x_{k}}{2}], \\
P_{k}^2(t) &  = P_k(t) \quad \textnormal{on}  & [\frac{x_{k-1} + x_{k}}{2}, x_{k}],
\end{array}
\end{equation}
and using $h = x_{k} - x_{k-1}$ we further write
\begin{equation}\nonumber
\begin{array}{c}
P_{k}^1(t) = \sum\limits_{i=0}^{5} q_i^1 B_i^5(t), \; \textnormal{where} \;
B_i^5(t) = \binom{5}{i} \left(\frac{2t-2x_{k-1}}{h}\right)^i
\left(\frac{2x_{k-1} + h -2t}{h}\right)^{5-i}
\end{array}
\end{equation}
and analogously for $P_{k}^2$.
The conversion from monomial to Bernstein basis gives the control points $(p_0^1,\dots,p_5^1)$ of $P_k^1$
over the interval $[x_{k-1},x_{k-1}+h/2]$ as
\begin{equation}\label{eq:P1}
(p_0^1,p_1^1,p_2^1,p_3^1,p_4^1,p_5^1) = \left(0,0, \frac{1}{8h}, \frac{1}{16h}, 0, 0\right)
\end{equation}
and similarly for $P_{k}^2$ we obtain
\begin{equation}\label{eq:P2}
(p_0^2,p_1^2,p_2^2,p_3^2,p_4^2,p_5^2) = \left(0,0, \frac{1}{16h}, \frac{1}{4h}, \frac{1}{2h}, 0\right).
\end{equation}
Therefore, $P_{k}^1$ and $P_{k}^2$ are non-negative on open intervals $(x_{k-1},x_{k-1}+h/2)$ and
$(x_{k-1}+h/2, x_{k})$, respectively. Due to the fact that $(p_4^1, p_5^1) = (p_0^2, p_1^2) = (0,0)$,
the only root (with multiplicity two) of $P_k$ on $(x_{k-1},x_{k})$ is $x_{k-1}+h/2$.
\end{proof}

\begin{rem}
$D_{4k+1} - D_{4k+2}$, $D_{4k-1}$, $D_{4k}$ are all positive polynomials on $(x_{k-1},x_{k})$
and therefore there exists infinitely many non-negative blends. However,
the existence of a non-negative blend when the coefficients have to satisfy a certain constraint is not obvious
and the full impact of this particular blend with coefficients $2, \frac{1}{2}, -1$
will be seen later in Lemma~\ref{lemmag1}.
%
\end{rem}

\subsection{Gaussian quadrature formulae}\label{ssec:quad}
In this section, we derive a quadrature rule for the family $S^{n}_{5,1}$, see (\ref{eq:family}).
Similarly to \cite{Quadrature31-2014}, we derive a quadrature rule that is optimal, exact and explicit.

With respect to exactness and optimality, according to \cite{Micchelli-1977,Micchelli-1972} there exists
a quadrature rule
\begin{equation}\label{quadrature}
\Q_a^b(f) = \int_{a}^{b} f(t) \mathrm{d}t = \sum_{i=1}^{2n+1} \omega_i f(\tau_i)
\end{equation}
that is exact for every function $f$ from the space $S^{n}_{5,1}$.
The explicitness follows from the construction.
 \begin{figure*}
 \hfill
 \begin{overpic}[width=.99\columnwidth]{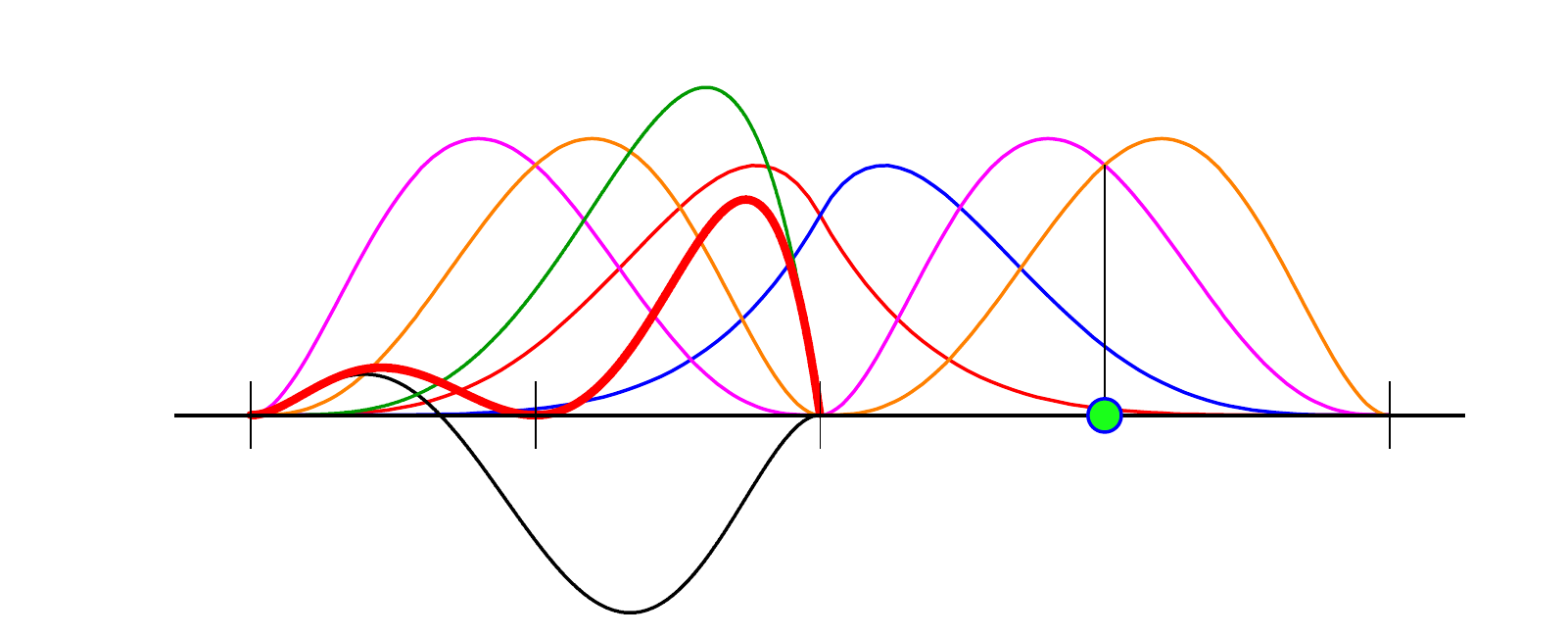}
    \put(15,10){$x_{k-1}$}
    \put(34,10){$\frac{x_{k-1}+x_{k}}{2}$}
    \put(51,10){$x_{k}$}
    \put(88,10){$x_{k+1}$}
    \put(70,10){$\tau_i$}
    \put(52,18){\color{red}{$P_k$}}
    \put(45,36){\color{Mgreen}{$2D_{4k+1}-2D_{4k+2}$}}
    \put(55,31){\color{blue}{$D_{4k+2}$}}
    \put(20,32){\color{magenta}{$D_{4k-1}$}}
    \put(32,33){\color{Morange}{$D_{4k}$}}
    \put(82,25){\color{Morange}{$D_{4k+4}$}}
    \put(47,4){$\frac{1}{2}D_{4k-1}-D_{4k}$}
  \end{overpic}
 \hfill
 \vspace{-4pt}
 \Acaption{1ex}{The assumption of existence of a single node $\tau_i$ inside $[x_{k},x_{k+1}]$ implies $\tau_i = \frac{x_{k+1}+x_{k}}{2}$.
   Consequently, the rule (\ref{quadrature}) must return zero for $P_k$ on $[x_{k-1},x_{k}]$, i.e. $\Q_{x_{k-1}}^{x_{k}}(P_k)=0$.
   As $P_k$ is non-negative on $(x_{k-1},x_{k})$ with one double root $\frac{x_{k-1}+x_{k}}{2}$, this fact violates the assumption of
   a single node in $(x_{k},x_{k+1})$.}\label{fig:Lemma2}
 \end{figure*}
%
\begin{lemma}\label{lemmag1}
Let $\mathcal{X}_{n} = (a=x_0,x_1,...,x_n=b)$ be a uniform knot sequence.
Each of the intervals $J_{k} = (x_{k-1},x_{k})\; (k=1,\dots,n)$ contains at least two nodes of
the Gaussian quadrature rule (\ref{quadrature}).
\end{lemma}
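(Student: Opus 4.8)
The plan is to show that no subinterval can carry only a single node by assuming the contrary, forcing that lone node to the midpoint with a prescribed weight, then propagating this rigidity leftward until it collides with the boundary. I will freely use that the rule (\ref{quadrature}) is exact on $S^n_{5,1}$, hence $\Q_a^b(f)=I[f]$ for every linear combination $f$ of the $D_j$, and that its weights are positive (part of the Micchelli--Pinkus framework \cite{Micchelli-1977,Micchelli-1972}). \emph{Step 1: a lone node must be the midpoint.} Fix $k$ and suppose $J_k=(x_{k-1},x_k)$ contains at most one node. The scaled Bernstein functions $D_{4k-1},D_{4k}$ are supported on $[x_{k-1},x_k]$, vanish at the endpoints, are positive inside, have $I[D_{4k-1}]=I[D_{4k}]=\tfrac16$ by (\ref{interiorIntegral}), and agree at $m_k:=\tfrac{x_{k-1}+x_k}{2}$ with common value $\tfrac{5}{16h}$. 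Testing the rule on $D_{4k-1}$ forces a node (of positive weight) inside $J_k$, hence exactly one, say $\tau$ of weight $\omega$; then $\omega D_{4k-1}(\tau)=\omega D_{4k}(\tau)=\tfrac16$ gives $D_{4k-1}(\tau)=D_{4k}(\tau)$, and their explicit form pins $\tau=m_k$ and $\omega=\tfrac{8h}{15}$.

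\emph{Step 2: leftward propagation through the blend $P_k$.} Suppose $J_{k+1}$ carries at most one node for some $1\le k\le n-1$; by Step 1 that node is $m_{k+1}$ with weight $\tfrac{8h}{15}$. I would apply the exact rule to the spline $P_k$ from (\ref{blend}). Its integral is $I[P_k]=2\cdot\tfrac16-2\cdot\tfrac16+\tfrac12\cdot\tfrac16-\tfrac16=-\tfrac{1}{12}$ by (\ref{interiorIntegral}). On the other hand $P_k$ is supported on $[x_{k-1},x_{k+1}]$ and vanishes at $x_{k-1},x_k,x_{k+1}$, so only the nodes inside $J_k$ together with $m_{k+1}$ contribute, and $\sum_{\tau\in J_k}\omega_\tau P_k(\tau)+\tfrac{8h}{15}\,P_k(m_{k+1})=-\tfrac{1}{12}$. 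A short computation gives $P_k(m_{k+1})=-\tfrac{5}{32h}$: on $J_{k+1}$ one has $P_k=2(D_{4k+1}-D_{4k+2})$, and combining the symmetry of the uniform B-splines $D_{4k+1},D_{4k+2}$ about $x_k$ with $P_k(m_k)=0$ and $(\tfrac12 D_{4k-1}-D_{4k})(m_k)=-\tfrac{5}{32h}$ (from Lemma~\ref{lemmaBlend} and Step 1) yields this value. Hence $\tfrac{8h}{15}P_k(m_{k+1})=-\tfrac1{12}$ exactly, so $\sum_{\tau\in J_k}\omega_\tau P_k(\tau)=0$; since the weights are positive and, by Lemma~\ref{lemmaBlend}, $P_k\ge0$ on $J_k$ with its only zero at $m_k$, every node of the rule in $J_k$ equals $m_k$. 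Thus $J_k$ also carries at most one node.

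\emph{Step 3: contradiction at the boundary.} If some $J_\ell$ had at most one node, Steps 1--2 with $k=\ell-1,\ell-2,\dots,1$ would force each of $J_{\ell-1},\dots,J_1$ to carry at most one node, so $J_1$ carries exactly one node, $m_1$, with weight $\tfrac{8h}{15}$. Now test the rule on $D_1$ and $D_2$. Within $[a,b]$ both are supported on $[x_0,x_1]$, vanish at $x_1$, and equal $\tfrac{1}{4h}$ at $x_0$, while $I[D_1]=\tfrac1{24}$ and $I[D_2]=\tfrac18$ by (\ref{boundaryIntegral}). Hence a possible node at $x_0$ and the node $m_1$ are the only ones seen by $D_1$ and $D_2$; subtracting the two exactness relations cancels the (unknown) weight at $x_0$ and leaves $\tfrac{8h}{15}\bigl(D_2(m_1)-D_1(m_1)\bigr)=\tfrac18-\tfrac1{24}=\tfrac1{12}$, i.e. $D_2(m_1)-D_1(m_1)=\tfrac{5}{32h}$. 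But evaluating the two polynomials directly gives $D_2(m_1)-D_1(m_1)=\tfrac{5}{64h}$, a contradiction. Therefore every $J_k$ contains at least two nodes.

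I expect Step 2 to be the crux: one must recognise that the particular coefficients $2,\tfrac12,-1$ in the blend $P_k$ are exactly what makes the weighted midpoint value $\tfrac{8h}{15}P_k(m_{k+1})$ coincide with $I[P_k]$, so that the a priori unknown contribution of the nodes sitting inside $J_k$ is annihilated; only after that cancellation do the non-negativity of $P_k$ on $J_k$ and the positivity of the weights close the argument. The rest is divided-difference bookkeeping (Steps 1 and 3) and the single boundary evaluation.
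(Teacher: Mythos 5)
Your proof is correct and follows essentially the same route as the paper's: the forced midpoint node with weight $\tfrac{8h}{15}$, the blend $P_k$ of Lemma~\ref{lemmaBlend} whose weighted value at $m_{k+1}$ exactly cancels $I[P_k]=-\tfrac{1}{12}$ so that the contribution from $J_k$ must vanish, and the boundary contradiction via $D_1,D_2$. The only differences are organizational — you unroll the paper's forward induction into a backward propagation of ``at most one node'' toward $J_1$ — and you are somewhat more explicit about weight positivity and possible nodes at the knots, both of which the paper uses implicitly.
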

\begin{proof}
We proceed by induction on the index of the segment $J_{k}$.
There must be at least two nodes of the Gaussian quadrature rule inside the interval $J_{1}$.
If there were no node inside $J_{1}$, the exactness of the rule would be violated for $D_{1}$.
If there was
only one node, using the exactness of the quadrature rule for $D_3$ and $D_4$,
it must have been the midpoint $\tau_1 = \frac{(x_{0}+x_{1})}{2}$
with the weight $\omega_1 = \frac{8}{15}h$. However, this contradicts exactness of $D_1$ and $D_2$
as $D_1 < D_2$ on $(x_0,x_1)$.

Now, let us assume that every segment $J_{k}$, $k<n$, contains--two or more--Gaussian nodes
and prove that $J_{k+1}$ contains at least two nodes too. By contradiction,
if there is no node inside $(x_{k},x_{k+1})$, the exactness of the quadrature rule (\ref{quadrature}) for $D_{4k+3}$ is violated.
If there is a single node in $(x_{k},x_{k+1})$, due to the exactness of the quadrature rule (\ref{quadrature}) 
for $D_{4k+3}$ and $D_{4k+4}$,
it must be the midpoint $\tau_i = \frac{(x_{k}+x_{k+1})}{2}$ as it is their only intersection point,
see Fig.~\ref{fig:Lemma2}, and their integrals are equal, see (\ref{interiorIntegral}), $I[D_{4k+3}]=I[D_{4k+4}]=\frac{1}{6}$.
The corresponding weight must be $\omega_i = \frac{8}{15}h$.
Moreover $D_{4k+1}(\tau_i) - D_{4k+2}(\tau_i) = - \frac{5}{64h}$ and combining with $\omega_i$, we have
\begin{equation}\label{residuum}
2\omega_i (D_{4k+1}(\tau_i) - D_{4k+2}(\tau_i)) = -\frac{1}{12}.
\end{equation}
Consider a blend $P_k(t) = 2 D_{4k+1}(t) - 2 D_{4k+2}(t) + \frac{1}{2} D_{4k-1}(t) - D_{4k}(t)$,
see Fig.~\ref{fig:Lemma2}. As $P_k$ is a blend of basis functions, the rule (\ref{quadrature})
must integrate it exactly on $[x_{k-1},x_{k+1}]$, that is $\Q_{x_{k-1}}^{x_{k+1}}(P_k) = I(P_k) =  -\frac{1}{12}$. However, combining this fact
with (\ref{residuum}), the rule must return zero when applied
to $P_k$ on $[x_{k-1},x_{k}]$, i.e. $\Q_{x_{k-1}}^{x_{k}}(P_k)=0$. But due to Lemma~\ref{lemmaBlend}, $P_k$ is non-negative
on $(x_{k-1},x_{k})$ with the only root at $\frac{x_{k-1}+x_{k}}{2}$, which contradicts
the assumption of a single quadrature node in $(x_{k},x_{k+1})$ and completes the proof.
\end{proof}
 \begin{figure*}[!tb]
 \hfill
 \begin{overpic}[width=.8\columnwidth]{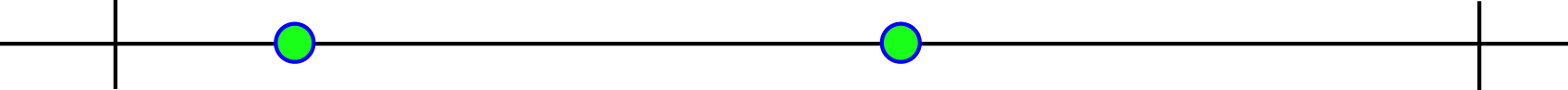}
    \put(2,-1){$x_{k-1}$}
    \put(95,-1){$x_{k}$}
    \put(18,-1){$\tau_{2k-1}$}
    \put(8,5){$\overbrace{\color{white}{aaaaaa}}$}
    \put(57,5){$\overbrace{\color{white}{aaaaaaaaaaaaaaaaaaa}}$}
    \put(55,-1){$\tau_{2k}$}
    \put(51,-9){$h$}
    \put(14,10){$\alpha_k$}
    \put(72,10){$\beta_k$}
    \put(8,-3.5){$\underbrace{\color{white}{aaaaaaaaaaaaaaaaaaaaaaaaaaaaaaaaaaaaaaaaaaaaa}}$}
  \end{overpic}
 \hfill
 \vspace{+20pt}
 \Acaption{1ex}{Notation on $[x_{k-1},x_{k}]$.}\label{fig:Notation}
 \end{figure*}
 %
\begin{cor}\label{corollary1}
If $n$ is an \emph{even} integer, then each of the intervals $J_{k} = (x_{k-1},x_{k})$ $(k=1,2,\ldots,n)$
contains exactly two Gaussian nodes and the middle $x_{n/2} = (a+b)/2$ of the interval $[a,b]$
is also a Gaussian node. If $n$ is \emph{odd} then each of the intervals $J_{k} = (x_{k-1},x_{k})$
$(k=1,2,\ldots,n; k\not= (n+1)/2)$ contains exactly two Gaussian nodes, while the interval
$J_{(n+1)/2}$ contains three Gaussian nodes: the middle $(a+b)/2$ and the other two
positioned symmetrically with respect to $(a+b)/2$.
\end{cor}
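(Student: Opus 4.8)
The plan is to combine the lower bound of Lemma~\ref{lemmag1} (each open interval $J_k$ contains at least two nodes) with a dimension-counting argument on the total number of nodes, and then to use the symmetry of the uniform knot sequence to pin down the distribution in the exceptional case. Since the rule \eqref{quadrature} uses exactly $2n+1$ nodes and, by Lemma~\ref{lemmag1}, each of the $n$ disjoint open intervals $J_1,\dots,J_n$ absorbs at least two of them, the $n$ intervals already account for at least $2n$ nodes; at most one node is left over. I would first argue that no node can lie at an interior knot $x_k$ ($1\le k\le n-1$): a node at $x_k$ together with the forced two nodes in each neighbouring interval would give $2n+2$ nodes in $\overline{J_k}\cup\overline{J_{k+1}}\cup(\text{rest})$, exceeding $2n+1$; more carefully, summing the "$\ge 2$ per interval" bound over all $n$ intervals uses up $2n$ of the $2n+1$ nodes, leaving room for exactly one extra node, and that extra node, if it sat on a shared endpoint $x_k$, would be double-counted and force some $J_j$ to contain fewer than two interior nodes, a contradiction. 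Hence either every $J_k$ contains exactly two nodes and the one remaining node sits at $x_0=a$ or $x_n=b$, or exactly one interval $J_{k_0}$ contains three nodes and every other $J_k$ contains exactly two, with no node on any knot.

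Next I would invoke symmetry. Because the knot sequence $\mathcal X_n$ is uniform on $[a,b]$, the space $S^n_{5,1}$ is invariant under the reflection $x\mapsto a+b-x$, and the optimal (Gaussian) rule inherits this symmetry — the reflected rule is again exact with $2n+1$ nodes, and one shows (using that the forced structure above is rigid enough that the node set is determined) that the node set and weights must be symmetric about the midpoint $(a+b)/2$. I would make the symmetry of the rule precise by noting that if $(\tau_i,\omega_i)$ is Gaussian then so is $(a+b-\tau_i,\omega_i)$, and that the combinatorial type (two per interval except one distinguished interval, or two per interval plus a boundary node) is preserved; since a boundary node at $a$ would reflect to a boundary node at $b$, giving $2n+2$ nodes, the "extra node at the boundary" alternative is actually impossible, so the one leftover node must always lie in the interior of some $J_{k_0}$.

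Finally I would split on the parity of $n$. If $n$ is even, the midpoint $(a+b)/2=x_{n/2}$ is an interior knot; the symmetric distinguished interval $J_{k_0}$ would have to be centred at $(a+b)/2$, which is impossible since $(a+b)/2$ is a knot, not an interval centre — so instead the symmetric configuration forces the single leftover node to sit exactly at $x_{n/2}=(a+b)/2$, and each $J_k$ contains exactly two nodes. If $n$ is odd, there is no interior knot at the midpoint; the only reflection-invariant choice of $k_0$ is the central interval $J_{(n+1)/2}$, whose centre is $(a+b)/2$, so $J_{(n+1)/2}$ contains three nodes, one of them the midpoint $(a+b)/2$ (the only fixed point of the reflection) and the other two placed symmetrically about it, while every other $J_k$ contains exactly two nodes. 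The main obstacle is the rigidity/symmetry step: one must justify that the optimal rule is genuinely unique enough (or at least that its combinatorial node distribution is unique) so that the reflection argument applies; this can be handled by observing that the "exactly two per interval plus one" structure, together with exactness on the full $4n+2$-dimensional space $S^n_{5,1}$, leaves no freedom in how the single extra node is placed once symmetry is imposed, but making this airtight — rather than hand-waving "by symmetry" — is where the real work lies.
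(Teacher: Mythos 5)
Your overall strategy --- count nodes ($2n+1$ in total, at least $2n$ absorbed by the $n$ open intervals via Lemma~\ref{lemmag1}), then use invariance of the uniform configuration under the reflection $x\mapsto a+b-x$ to place the single leftover node --- is exactly the paper's argument, and your parity split at the end reaches the right conclusions. You are also right, and more candid than the paper, that the reflection step silently uses uniqueness of the Gaussian rule (or at least of its combinatorial node distribution); the paper simply writes ``by symmetry'' and cites \cite{Micchelli-1977} for the node count.

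However, there is a genuine internal error in your intermediate step. You claim that no node can lie at an interior knot $x_k$, arguing that such a node would be ``double-counted'' and force some $J_j$ to contain fewer than two nodes. This is wrong: the intervals $J_k=(x_{k-1},x_k)$ are \emph{open}, so a node sitting at a knot belongs to none of them and is counted exactly once, in addition to the $2n$ interior nodes. A node at an interior knot plus two nodes in each open interval gives precisely $2n+1$, which is admissible --- and indeed this is exactly the configuration that occurs for even $n$, where the extra node sits at the interior knot $x_{n/2}=(a+b)/2$. Your stated dichotomy (``extra node at $a$ or $b$'' versus ``one interval with three interior nodes'') therefore omits the case that actually happens, and your even-$n$ conclusion contradicts your own earlier claim. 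The fix is simply to drop the interior-knot exclusion: after establishing (by symmetry) that for even $n$ every $J_k$ has exactly two nodes, the leftover node lies at some knot $x_j$, and reflection invariance forces $j=n-j$, i.e.\ $j=n/2$; a node at $a$ would reflect to one at $b$ and overshoot the count. With that repair your proof coincides with the paper's.
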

\begin{proof}
From \cite{Micchelli-1977}, the optimal quadrature rule (\ref{quadrature}) is known to require $2n+1$ Gaussian nodes.
From Lemma~\ref{lemmag1}, we know the location of $2n$ of them as each of the intervals $J_k$ contains at least two nodes.
The last node must be the midpoint $(a+b)/2$. We prove it by contradiction, distinguishing two cases depending on the parity of $n$.
For $n$ \textit{even}, if one of the intervals $J_{k}$ has more than
two nodes then, by symmetry, $J_{n-k}$ has to contain the same number of nodes and we exceed $2n+1$,
contradicting our quadrature rule (\ref{quadrature}).
For $n$ \textit{odd}, let us assume that the middle
interval $J_{(n+1)/2}$ contains exactly two nodes. Then, by symmetry, at least two of the remaining intervals
contain three nodes, contradicting our quadrature rule (\ref{quadrature}).
Therefore, the middle interval $J_{(n+1)/2}$ contains exactly three nodes, where the middle one is,
again by symmetry, forced to be the midpoint $(a+b)/2$.
\end{proof}

With the knowledge of the exact layout of the optimal quadrature nodes, we now construct a scheme that starts at the boundary of the interval
and parses to its middle, recursively computing the nodes and weights. This process requires to solve only for the
roots of a quadratic polynomial.
Let us denote
\begin{equation}\label{eq:alpha}
\alpha_k = \tau_{2k-1} - x_{k-1}, \quad \beta_k = x_{k} - \tau_{2k},
\end{equation}
where $\tau_{2k-1}$ and $\tau_{2k}$, $\tau_{2k-1} < \tau_{2k}$, are the two quadrature
nodes on $(x_{k-1}, x_k)$, $k=1,\dots, [n/2]+1$, see Fig.~\ref{fig:Notation}. Keeping in mind $h = x_k-x_{k-1}$, we have
\begin{equation}\label{eq:tau}
x_k - \tau_{2k-1} = h-\alpha_k, \quad \tau_{2k} - x_{k-1} = h - \beta_k.
\end{equation}
Let $\omega_{2k-1}$ and $\omega_{2k}$ be
the corresponding weights of the Gaussian quadrature rule over the interval $(x_{k-1}, x_k)$.
The exactness requirement of the rule when applied to $D_{4k-1}$ and $D_{4k}$,
see (\ref{basis2b}) and (\ref{interiorIntegral}), gives the following algebraic constraints
\begin{equation}\label{system1a}
\begin{split}
\omega_{2k-1} \alpha_k^2 (h-\alpha_k)^3 + \omega_{2k} (h-\beta_k)^2 \beta_k^3 & = \frac{h^6}{60}, \\
\omega_{2k-1} \alpha_k^3 (h-\alpha_k)^2 + \omega_{2k} (h-\beta_k)^3 \beta_k^2 & = \frac{h^6}{60}.
\end{split}
\end{equation}
The exactness of the rule when applied on $D_{4k-3}$ and $D_{4k-2}$, respectively, gives
\begin{equation}\label{system1b}
\begin{split}
\omega_{2k-1}  (h-\alpha_k)^5 + \omega_{2k} \beta_k^5  & = 4h^6 r_{4k-3}, \\
\omega_{2k-1} \left(\frac{5(h-\alpha_k)^4}{2h^5} - \frac{9(h-\alpha_k)^5}{4h^6} \right) +
 \omega_{2k}  \left(\frac{5\beta_k^4}{2h^5} - \frac{9\beta_k^5}{4h^6} \right) & = r_{4k-2},
\end{split}
\end{equation}
where $r_{4k-3}$ and $r_{4k-2}$ are the \textit{residues} between the exact integrals,
see (\ref{interiorIntegral}) and (\ref{boundaryIntegral}), and the result of the rule
when applied to $D_{4k-3}$ and $D_{4k-2}$ on the previous interval $[x_{k-2},x_{k-1}]$, respectively.
That is
\begin{equation}\label{eq:residues}
\begin{split}
r_{4k-3} & = I[D_{4k-3}] - \Q_{x_{k-2}}^{x_{k-1}}(D_{4k-3}), \\
r_{4k-2} & = I[D_{4k-2}] - \Q_{x_{k-2}}^{x_{k-1}}(D_{4k-2}).
\end{split}
\end{equation}

Due to the fact that both (\ref{system1a}) and (\ref{system1b}) are linear in $\omega_{2k-1}$ and $\omega_{2k}$,
their elimination from (\ref{system1a}) gives
\begin{equation}\label{eq:elim1a}
\begin{split}
\omega_{2k-1} & = \frac{h^5 (h - 2\beta_k)}{60 \alpha_k^2 (h-\alpha_k)^2 (h - \alpha_k - \beta_k)}, \\
\omega_{2k} & = \frac{h^5 (h - 2\alpha_k)}{60 \beta_k^2 (h-\beta_k)^2 (h - \alpha_k - \beta_k)},
\end{split}
\end{equation}
and from (\ref{system1b}) we obtain
\begin{equation}\label{eq:elim1b}
\begin{split}
 \omega_{2k-1} &  = \frac{-2h^5(9\beta_kr_{4k-3}- 10hr_{4k-3} + \beta_kr_{4k-2})}
{5 (h-\alpha_k)^4 (h - \alpha_k - \beta_k)}, \\
 \omega_{2k} & = \frac{-2h^5(hr_{4k-3} + \alpha_kr_{4k-2} + 9r_{4k-3}\alpha_k - hr_{4k-2})}
{5 \beta_k^4 (h - \alpha_k - \beta_k)}.
\end{split}
\end{equation}
Equating $\omega_{2k-1}$ and $\omega_{2k}$ from (\ref{eq:elim1a}) and (\ref{eq:elim1b})
we obtain
\begin{equation}\label{eq:fg}
\begin{split}
& \Phi_k(\alpha_k, \beta_k ) =  0,\\
& \Psi_k(\alpha_k, \beta_k ) =  0,
\end{split}
\end{equation}
an algebraic system of degree three with the unknowns $\alpha_k$ and $\beta_k$. Solving this
non-linear system of two equations with two unknowns can
be interpreted as the intersection problem of two algebraic curves, see Fig.~\ref{fig:AlgSys}.
The domain of interest is $(0,h)\times (0,h)$ as both quadrature points lie inside $(x_{k-1},x_k)$.

 \begin{figure*}
 \hfill
 \begin{overpic}[width=.4\columnwidth]{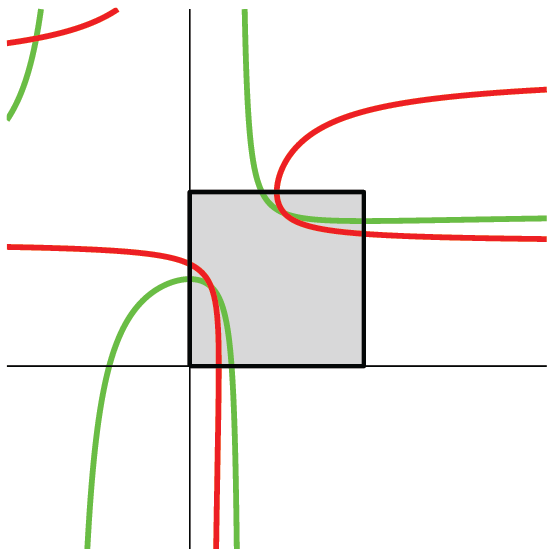}
    \put(26,90){$\beta_{1}$}
    \put(29,27){$0$}
    \put(65,27){$h$}
    \put(90,27){$\alpha_1$}
    \put(55,70){\color{red}{$\Phi_1(\alpha_1,\beta_1)=0$}}
    \put(45,5){\color{Mgreen}{$\Psi_1(\alpha_1,\beta_1)=0$}}
    \put(34,45.5){\huge $*$}
    \put(46.5,58.5){\huge $*$}
  \end{overpic}
 \hfill
 \vspace{-4pt}
 \Acaption{1ex}{The algebraic system (\ref{eq:fg}) over the domain $(0,h)\times (0,h)$ (grey) for the first ($k=1$) subinterval $[x_{0},x_1]$
 is shown. The two intersection points
 correspond to the two Gaussian nodes on $[x_{0},x_1]$ and are computed by projection onto $\alpha_1$-axis using the resultant.
 The coordinates of the intersection points with respect to $\alpha_1$-axis are the roots of the quadratic polynomial (\ref{eq:quadratic}).
 }\label{fig:AlgSys}
 \end{figure*}

Using the resultant, see e.g. \cite{Cox-2005}, of these two algebraic curves in the direction of $\beta_k$,
one obtains a univariate polynomial, in general, of degree nine. Interestingly,
our system (\ref{eq:fg}) produces--for all admissible residues $r_{4k-3}$ and $r_{4k-2}$--only 
a quintic polynomial $E_k(\alpha_k)$ that gets factorized over $\mathbb R$ as
\begin{equation}\label{eq:e5}
Res_{\beta_k}(\Phi_k(\alpha_k, \beta_k ), \Psi_k(\alpha_k, \beta_k )) = E_k(\alpha_k) = Q_k(\alpha_k) C_k(\alpha_k),
\end{equation}
where $Q_k$
is a quadratic factor and the vector of its coefficients with respect to the \textit{monomial} basis, $\bq_{mo}^k = (q^k_0,q^k_1,q^k_2)$,
is--in terms of the residues--expressed as
%
\begin{equation}\label{eq:quadratic}
\begin{split}
& q^k_2 = 1-480r_{4k-3} + 576r_{4k-3}^2 + 576r_{4k-2}^2 - 1152r_{4k-2}r_{4k-3},\\
& q^k_1 = 2h(12r_{4k-2} + 108r_{4k-3} - 1),\\
& q^k_0 = h^2(1-24r_{4k-2} + 24r_{4k-3}),
\end{split}
\end{equation}
and for the vector of monomial coefficients $\bc_{mo}^k = (c_0^k,c_1^k,c_2^k,c_3^k)$ of the cubic factor $C_k$ we obtain
\begin{equation}\label{eq:cubic}
\begin{split}
& c^k_3 = -216r_{4k-3} - 24r_{4k-2} + 2,\\
& c^k_2 = h(24r_{4k-2} -24r_{4k-3} - 5),\\
& c^k_1 = 4h^2,\\
& c^k_0 = -h^3.
\end{split}
\end{equation}
A Maple worksheet with this algebraic factorization is attached to this submission.

We recall that two roots of $E_k$ (\ref{eq:e5}) determine the two quadrature nodes that lie inside $[x_{k-1},x_k]$.
Interestingly, the cubic factor does not contribute to the computation of
the nodes which is formalized as follows.
%
\begin{lemma}\label{lem:NoRoot}
The cubic polynomial $C_k$ (\ref{eq:cubic}) has no roots inside $[0,h]$.
\end{lemma}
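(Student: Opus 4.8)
The plan is to prove the stronger claim that $C_k(\alpha)<0$ for \emph{every} $\alpha\in[0,h]$, which a fortiori precludes a root there. The first step is a normalisation: writing $\alpha=hs$ and $C_k(hs)=h^{3}\widetilde C_k(s)$ turns the coefficients (\ref{eq:cubic}) into
\[
\widetilde C_k(s)=\bigl(2-216\,r_{4k-3}-24\,r_{4k-2}\bigr)s^{3}+\bigl(24\,r_{4k-2}-24\,r_{4k-3}-5\bigr)s^{2}+4s-1,
\]
so the assertion becomes $\widetilde C_k(s)<0$ on $[0,1]$. Two immediate observations frame the problem: $\widetilde C_k(0)=-1<0$, while a one-line simplification gives $\widetilde C_k(1)=-240\,r_{4k-3}$. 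Hence the statement can hold only once we know $r_{4k-3}\ge 0$, and in fact everything will hinge on confining the residue pair $(r_{4k-3},r_{4k-2})$.

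The core of the argument is therefore to identify the region $\mathcal R\subset\mathbb R^{2}$ to which $(r_{4k-3},r_{4k-2})$ is constrained as the recursion proceeds, and this I would establish by induction on $k$. The base case $k=1$ uses the boundary integrals (\ref{boundaryIntegral}): there $(r_1,r_2)=\bigl(\tfrac1{24},\tfrac18\bigr)$ (and, when $n$ is odd, the symmetric middle interval is treated separately, with its own explicit residues). For the inductive step one feeds the closed forms (\ref{eq:elim1a})--(\ref{eq:elim1b}) of the weights into the admissibility requirements $\omega_{2k-1},\omega_{2k}>0$ and $0<\alpha_k,\beta_k<h$: positivity of the two weights already forces $\alpha_k,\beta_k<h/2$, and, combined with the basis facts from Section~\ref{ssec:spline} --- in particular $0\le D_{4k-2}\le D_{4k-3}$ on $[x_{k-2},x_{k-1}]$, where indeed $D_{4k-3}-D_{4k-2}=\tfrac{5}{2h^{6}}(t-x_{k-2})^{4}(x_{k-1}-t)$ there --- this yields $0\le r_{4k-3}<r_{4k-2}\le\tfrac16$ together with the sharper inequalities between $r_{4k-3}$ and $r_{4k-2}$ that cut $\mathcal R$ down to the genuinely reachable set.

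With $\mathcal R$ in hand the endgame is short. For each fixed $s\in[0,1]$ the map $(r_{4k-3},r_{4k-2})\mapsto\widetilde C_k(s)$ is \emph{affine} --- explicitly $\widetilde C_k(s)=(2s^{3}-5s^{2}+4s-1)+r_{4k-3}(-216s^{3}-24s^{2})+r_{4k-2}(-24s^{3}+24s^{2})$ --- so its maximum over $\mathcal R$ is attained on the boundary $\partial\mathcal R$. Parametrising the finitely many boundary arcs of $\mathcal R$ then reduces the claim to a handful of one-variable polynomial inequalities on $[0,1]$, each settled by an elementary endpoint-and-discriminant check (and can be confirmed symbolically).

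The hard part is squarely the identification of $\mathcal R$ in the inductive step: the naive box $0\le r_{4k-3}\le r_{4k-2}\le\tfrac16$ is \emph{not} tight enough, since at its corner $r_{4k-3}=0$, $r_{4k-2}=\tfrac16$ one finds $\widetilde C_k\bigl(\tfrac23\bigr)=\tfrac{17}{27}>0$, so $C_k$ would then vanish inside $(0,h)$. One must therefore carry along a sharper invariant --- essentially that the recursion map contracts the residue pair toward its fixed point, the limit corresponding to the ``two-third'' rule of \cite{Hughes-2010} --- and it is this invariant that keeps $\mathcal R$ inside the region where $\widetilde C_k<0$. Once $\mathcal R$ is pinned down, the normalisation and the affine-maximum steps are purely mechanical.
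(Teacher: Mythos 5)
Your skeleton matches the paper's: normalize to $[0,1]$, aim for the stronger claim $C_k<0$ on the whole interval, exploit the affine dependence of $C_k$ on the residue pair, and recognize that everything reduces to confining $(r_{4k-3},r_{4k-2})$ to a suitable region. Your observation that the naive box $0\le r_{4k-3}\le r_{4k-2}\le\tfrac16$ is too large is correct and your counterexample at $(0,\tfrac16)$ is right. But the proof does not close, because the one thing it actually needs --- the sharper constraint on the residues and a derivation of it --- is only asserted to exist (``one must carry along a sharper invariant \dots that the recursion map contracts the residue pair toward its fixed point''). That is precisely the nontrivial content of the lemma, and your proposed route to it (feed the closed-form weights (\ref{eq:elim1a})--(\ref{eq:elim1b}) into $\omega>0$ and $0<\alpha_k,\beta_k<h/2$, then induct) is not shown to produce any usable inequality between $r_{4k-3}$ and $r_{4k-2}$; positivity of weights and node locations on interval $k-1$ do not translate in any obvious way into a bound on the residues for interval $k$.

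The paper's missing ingredient is the blend $P_k$ of Lemma~\ref{lemmaBlend}: since $P_k\ge 0$ on $(x_{k-1},x_k)$ and the rule has positive weights and is exact for the locally supported $D_{4k-1}$, $D_{4k}$, applying $\Q$ on $[x_{k-1},x_k]$ to the inequality $2D_{4k+1}-2D_{4k+2}+\tfrac12 D_{4k-1}\ge D_{4k}$ yields
\begin{equation*}
2\bigl(4r_{4k-3}-r_{4k-2}\bigr)+\tfrac{1}{12}\;\ge\;\tfrac{1}{6},
\end{equation*}
which combined with $r_{4k-2}<\tfrac16$ gives $16r_{4k-3}>5r_{4k-2}$. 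Together with $0<r_{4k-3}<r_{4k-2}<\tfrac16$ this is the region $\mathcal R$ you were looking for, and it is enough: the paper then splits $[0,1]$ at $\xi_g=\frac{r_{4k-2}-r_{4k-3}}{9r_{4k-3}+r_{4k-2}}$ and at $\tfrac12$ and checks sign via Bernstein coefficients (equivalent to your boundary-of-$\mathcal R$ check). Note also that this is the second, less advertised payoff of the specific coefficients $2,\,\tfrac12,\,-1$ in $P_k$ --- the same blend that drives Lemma~\ref{lemmag1}. Without identifying this (or an equivalent) residue inequality, your argument remains a plan rather than a proof.
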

A proof can be found in Appendix.

\medskip
We now proceed to the main contribution of the paper,
a recursive algorithm that computes the nodes and the weights
of the Gaussian quadrature for uniform $C^1$ quintic splines. Due to Lemma~\ref{lem:NoRoot},
the recursion operates in a closed form fashion by solving for the roots of quadratic polynomial (\ref{eq:quadratic}).
Before we state the theorem, we need to establish some notation.

 \begin{figure*}[!tb]
 \hfill
 \begin{overpic}[width=.91\columnwidth]{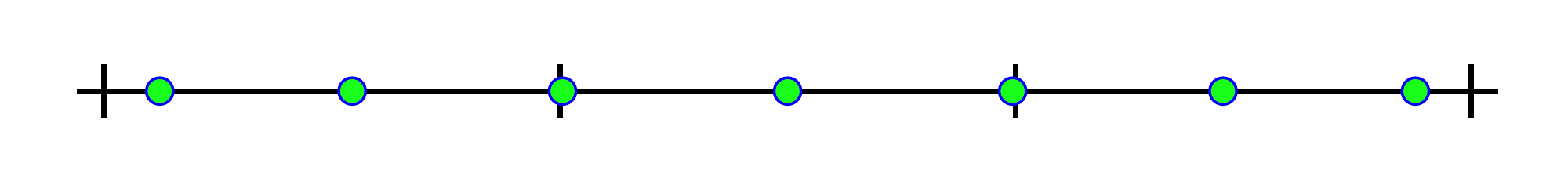}
    \put(50,3){$\tau_{n+1}$}
    \put(63,3){$\tau_{n+2}$}
    \put(36,3){$\tau_{n}$}
    \put(5,10){$x_{m-2}$}
    \put(35,10){$x_{m-1}$}
    \put(64,10){$x_{m}$}
    \put(93,10){$x_{m+1}$}
  \end{overpic}
 \hfill
 \vspace{0pt}
 \Acaption{1ex}{The situation for odd $n$ on the middle interval $J_m = [x_{m-1},x_m]$. The node $\tau_{n+1}$ is the middle
  of the interval, $\tau_n$ and $\tau_{n+2}$ are computed from (\ref{eq:MainOddNs}).}\label{fig:MiddleOdd}
 \end{figure*}

Let us denote by $A_{k}$ and $B_k$, $k=2,\dots, [n/2]+1$, the actual values of residues (\ref{eq:residues}) when being evaluated at
the nodes $\tau_{2k-3}$ and $\tau_{2k-2}$ on the interval $[x_{k-2},x_{k-1}]$, i.e.,
\begin{equation}\label{eq:evalAB}
\begin{split}
A_{k} & = I[D_{4k-3}] - \omega_{2k-3} D_{4k-3}(\tau_{2k-3}) - \omega_{2k-2} D_{4k-3}(\tau_{2k-2}), \\
B_{k} & = I[D_{4k-2}] - \omega_{2k-3} D_{4k-2}(\tau_{2k-3}) - \omega_{2k-2} D_{4k-2}(\tau_{2k-2}),
\end{split}
\end{equation}
and the coefficients of the quadratic polynomial (\ref{eq:quadratic}) become
\begin{equation}\label{eq:evalQuadratic}
\begin{split}
& a_k = 1-480A_{k} + 576A_{k}^2 + 576B_{k}^2 - 1152B_{k}A_{k},\\
& b_k = 2h(12B_{k} + 108A_{k} - 1),\\
& c_k = h^2(1-24B_{k} + 24A_{k}).
\end{split}
\end{equation}
In the case when $n$ is odd, see Fig.~\ref{fig:MiddleOdd}, the middle subinterval contains three nodes
and the algebraic system that needs to be solved results in a quadratic polynomial
with the following coefficients
\begin{equation}\label{eq:evalMiddleOdd}
\begin{split}
& \widetilde{a}_m = -2(108A_{m} + 12B_m + 1),\\
& \widetilde{b}_m = 2h(108A_{m} + 12B_m - 1),\\
& \widetilde{c}_m = h^2(12A_m -12B_m +1).
\end{split}
\end{equation}
We are now ready to formalize the main theorem.
\begin{theorem}\label{theo:Quadrature}
%
%
The sequences of nodes and weights of the Gaussian quadrature rule (\ref{quadrature})
are given explicitly by the initial values $A_1=\frac{1}{24}$ and $B_1=\frac{1}{8}$
and the recurrence relations $(k =1,...,[n/2])$ for the nodes
%
%
\begin{equation}\label{eq:MainQuadratureNs}
\renewcommand{\arraystretch}{1.4}
\begin{array}{c}
\tau_{2k-1} = x_{k-1} + \frac{-b_k-\sqrt{b_k^2-4a_kc_k}}{2a_k} \quad \text{and} \quad
\tau_{2k} = x_{k} - \frac{-b_k+\sqrt{b_k^2-4a_kc_k}}{2a_k}
\end{array}
\end{equation}
and for the weights
\begin{equation}\label{eq:MainQuadratureWs}
\begin{split}
\omega_{2k-1} & = \frac{h^5 (2\tau_{2k} - 2x_{k-1} - h)}{60 (\tau_{2k-1}-x_k+h)^2 (x_k-\tau_{2k-1})^2 (\tau_{2k} - \tau_{2k-1})}, \\
\omega_{2k}   & = \frac{h^5 (2x_{k} - 2\tau_{2k-1} - h)}{60 (x_{k-1}-\tau_{2k}+h)^2 (\tau_{2k} - x_{k-1})^2 (\tau_{2k} - \tau_{2k-1})}.
\end{split}
\end{equation}
If $n$  is even ($n=2m$) then $\tau_{n+1} = x_{m} = (a+b)/2$ and
\begin{equation}\label{eq:MainEven}
\omega_{n+1} = 4h(\frac{1}{6} - A_{m+1}).
\end{equation}
If $n$ is odd, ($n=2m-1$) then $\tau_{n+1} = (a+b)/2$,
\begin{equation}\label{eq:MainOddNs}
\begin{array}{c}
\tau_{n} = x_{m-1} + \frac{-\widetilde{b}_m-\sqrt{\widetilde{b}_m^2-4\widetilde{a}_m\widetilde{c}_m}}{2\widetilde{a}_m}
\quad \text{and} \quad
\tau_{n+2} = x_{m} - \frac{-\widetilde{b}_m+\sqrt{\widetilde{b}_m^2-4\widetilde{a}_m\widetilde{c}_m}}{2\widetilde{a}_m}
\end{array}
\end{equation}
and the corresponding weights are
\begin{equation}\label{eq:MainOddWs}
\begin{split}
\omega_{n}   & =  \omega_{n+2} = \frac{h(108A_{m}+12B_{m}-1)^2}{30(156A_{m}-36B_{m}+1)},\\
\omega_{n+1} & =  \frac{4h(1152A_{m}B_{m}+264A_{m}-576A_{m}^2-576B_{m}^2-24B_{m}+1)}{15(156A_{m}-36B_{m}+1)}.\\
\end{split}
\end{equation}
\end{theorem}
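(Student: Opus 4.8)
The plan is to read the theorem as an \emph{assembly} of the algebraic reductions (\ref{eq:elim1a})--(\ref{eq:cubic}) into a recursion, using Corollary~\ref{corollary1} to fix the combinatorics of the nodes and Lemma~\ref{lem:NoRoot} to keep that recursion in closed form. Existence of a rule exact on $S^n_{5,1}$ with $2n+1$ nodes is granted by \cite{Micchelli-1977}, and, since the partition $\mathcal X_n$ and the weight $\omega\equiv 1$ are symmetric about $(a+b)/2$, so is this rule. Corollary~\ref{corollary1} then says there are exactly two interior nodes on each $J_k$, plus a node at $(a+b)/2$ when $n$ is even, or three symmetric nodes on the central interval when $n$ is odd. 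It therefore suffices to determine $\tau_{2k-1},\tau_{2k},\omega_{2k-1},\omega_{2k}$ for $k=1,\dots,[n/2]$ together with the central data; the right half is the mirror image.

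I would induct on $k$, marching from the endpoint $a$ inward. Base case: for $k=1$ there is no interval of $\mathcal X_n$ to the left of $J_1$ inside $[a,b]$, so the subtracted term in (\ref{eq:residues}) is empty and $r_1=I[D_1]=\tfrac1{24}$, $r_2=I[D_2]=\tfrac18$ by (\ref{boundaryIntegral}); these are precisely $A_1,B_1$. Inductive step: assuming the rule on $J_1,\dots,J_{k-1}$ is known, the decisive structural observation is that $\mathrm{supp}\,D_{4k-3}=\mathrm{supp}\,D_{4k-2}=[x_{k-2},x_k]$ while $\mathrm{supp}\,D_{4k-1}=\mathrm{supp}\,D_{4k}=J_k$, so the four exactness conditions on $D_{4k-3},\dots,D_{4k}$ involve, as unknowns, \emph{only} the four quantities attached to $J_k$: the two Bernstein functions give (\ref{system1a}), and the two remaining functions give (\ref{system1b}) with $r_{4k-3}=A_k$, $r_{4k-2}=B_k$ computed from the previous interval as in (\ref{eq:evalAB}). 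This is where the system decouples interval by interval; everything after is the algebra already displayed before the theorem.

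That algebra I would run exactly as in the excerpt: (\ref{system1a}) and (\ref{system1b}) are each linear in $(\omega_{2k-1},\omega_{2k})$, giving the expressions (\ref{eq:elim1a}) and (\ref{eq:elim1b}); equating them yields the bivariate system (\ref{eq:fg}); the resultant in $\beta_k$ factors as in (\ref{eq:e5}) into $Q_kC_k$, with $Q_k,C_k$ as in (\ref{eq:quadratic}),(\ref{eq:cubic}) and the residues specialised to $A_k,B_k$, i.e.\ with the coefficients (\ref{eq:evalQuadratic}). Since $\alpha_k\in(0,h)$ and $C_k$ has no root in $[0,h]$ by Lemma~\ref{lem:NoRoot}, $\alpha_k$ is forced to be a root of the quadratic $Q_k$; the same elimination in the other variable puts $\beta_k$ at the companion root, and the ordering $\tau_{2k-1}<\tau_{2k}$ (equivalently $\alpha_k+\beta_k<h$) fixes which square-root sign belongs to which node, giving (\ref{eq:MainQuadratureNs}). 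Feeding $\alpha_k,\beta_k$ back into (\ref{eq:elim1a}) and rewriting $h-\alpha_k=x_k-\tau_{2k-1}$, $h-\beta_k=\tau_{2k}-x_{k-1}$, etc., produces (\ref{eq:MainQuadratureWs}). Every step is reversible, so the output of the recursion is exact on $S^n_{5,1}$ with the node layout of Corollary~\ref{corollary1}, hence is the Gaussian rule.

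Finally the centre must be handled separately. If $n=2m$, the extra node is the knot $\tau_{n+1}=(a+b)/2=x_m$; exactness of $D_{4m+1}$ --- equivalently of $D_{4m+2}$, since the reflection about $x_m$ interchanges these two B-splines and carries the nodes and weights of $J_{m+1}$ onto those of $J_m$ --- after using $D_{4m+1}(x_m)=\tfrac1{4h}$ reduces to a single linear equation for $\omega_{n+1}$, namely (\ref{eq:MainEven}), with $A_{m+1}$ from (\ref{eq:evalAB}). If $n=2m-1$, the central interval carries $\tau_{n+1}=(a+b)/2$ and a symmetric pair $\tau_n,\tau_{n+2}$; imposing exactness for $D_{4m-3},D_{4m-2},D_{4m-1}$ (the fourth Bernstein condition being redundant by symmetry) and eliminating the weights as before leaves a quadratic in the single remaining position parameter, with coefficients (\ref{eq:evalMiddleOdd}), from which (\ref{eq:MainOddNs})--(\ref{eq:MainOddWs}) follow by solving and back-substituting. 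The main obstacle is the bookkeeping behind (\ref{eq:e5}): showing that the generically degree-nine resultant collapses, for \emph{every} admissible pair of residues, to exactly a quadratic times a cubic with the coefficients (\ref{eq:quadratic})--(\ref{eq:cubic}) --- the heavy symbolic identity verified in the attached Maple worksheet --- together with checking that the recursion never stalls, i.e.\ that $b_k^2-4a_kc_k\ge 0$ and the denominators in (\ref{eq:MainQuadratureWs}) remain non-zero at each step.
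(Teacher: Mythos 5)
Your proposal follows essentially the same route as the paper's proof: induction from the boundary inward with $A_1=\tfrac{1}{24}$, $B_1=\tfrac18$, the node layout fixed by Corollary~\ref{corollary1}, elimination of the weights to reach the resultant factorization (\ref{eq:e5}) with Lemma~\ref{lem:NoRoot} discarding the cubic factor, and the separate even/odd treatment of the central interval via (\ref{eq:MainEven}) and the $3\times 3$ system leading to (\ref{eq:evalMiddleOdd}). Your added remarks on the support-based decoupling of the exactness conditions and on verifying $b_k^2-4a_kc_k\ge 0$ are sensible supplements, but the argument is the same one the paper gives.
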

\begin{proof}
We proceed by induction.
Assume the quadrature nodes $(\tau_{2l-1}, \tau_{2l})$ and weights $(\omega_{2l-1},\omega_{2l})$
are known for $l=1,\ldots,k-1$ ($k \leq [n/2]$) and compute the new ones on $(x_{k-1}, x_k)$.
For $k=1$, as there are no nodes on $(x_{-1}, x_0)$,
(\ref{eq:evalAB}) gives $A_1 = I[D_{1}] = \frac{1}{24}$ and $B_1 = I[D_{2}] = \frac{1}{8}$.
By Corollary~\ref{corollary1}, there are exactly two nodes inside $(x_{k-1},x_{k})$.
Due to Lemma~\ref{lem:NoRoot}, only the roots of the quadratic factor in (\ref{eq:e5})
contribute to the computation of the nodes and hence solving $Q_k(\alpha_k)=0$ with coefficients from (\ref{eq:evalQuadratic})
gives $\alpha_k$ and $\beta_k$. Combining these with (\ref{eq:alpha})
results in (\ref{eq:MainQuadratureNs}). The weights are computed from (\ref{eq:elim1a})
using the identities (\ref{eq:alpha}) and (\ref{eq:tau}).
By Corollary~\ref{corollary1}, we have $\tau_{n+1} = (a+b)/2$.
If $n$ is even, using the exactness of the quadrature for $D_{2n+1}$, the associated weight is computed
from
\begin{equation}
\frac{1}{6} = I[D_{2n+1}] = A_{m+1} + \omega_{n+1} D_{2n+1}(\tau_{n+1}).
\end{equation}
Evaluating $D_{2n+1}((a+b)/2) = \frac{1}{4h}$ gives (\ref{eq:MainEven}).
If $n$ is odd, due Corollary~\ref{corollary1}, there are three nodes inside $(x_{m-1},x_m)$; one is the middle point $(a+b)/2$ and the other two
are symmetric with respect to it, see Fig.~\ref{fig:MiddleOdd}.
Using the notation of (\ref{eq:alpha}) for the middle interval, i.e., $\alpha_m = \tau_{2m-1} - x_{m-1}$,
the rule must integrate exactly $D_{4m-3}$, $D_{4m-2}$ and $D_{4m-1}$ which
gives the following $3\times 3$ algebraic system
\begin{equation}\label{eq:MiddleSystem}
\renewcommand{\arraystretch}{1.4}
\begin{array}{rcccl}
\frac{(h-\alpha_m)^5+\alpha_m^5}{4h^6}\omega_{n} & + & \frac{1}{128h}\omega_{n+1} & = & A_m, \\
\frac{(h-\alpha_m)^4(9\alpha_m+h) + \alpha_m^4(10h-9\alpha_m)}{4h^6}\omega_{n} & + & \frac{11}{128h}\omega_{n+1} & = & B_m, \\
\frac{10\alpha_m^2(h-\alpha_m)^2}{h^5}\omega_{n} & + & \frac{5}{16h}\omega_{n+1} & = & \frac{1}{6},
\end{array}
\end{equation}
with unknowns $\alpha_m$, $\omega_n$ and $\omega_{n+1}$. Eliminating $\omega_{n+1}$ from the first two and
second two equations, respectively, and solving for $\omega_{n}$ we obtain
\begin{equation}\nonumber
\omega_n = \frac{2h^5(11A_m-B_m)}{5(h^2-2h\alpha_m+2\alpha_m^2)(h-2\alpha_m)^2}
=\frac{h^5(240B_m-11)}{60(h^2+9h\alpha_m-9\alpha_m^2)(h-2\alpha_m)^2}
\end{equation}
and the problem reduces to solving for the roots of a univariate (rational) function in $\alpha_m$. The numerator
is a quadratic polynomial with coefficients (\ref{eq:evalMiddleOdd}) which proves (\ref{eq:MainOddNs}).
Inserting (\ref{eq:evalMiddleOdd}) into (\ref{eq:MiddleSystem}) and solving for $\omega_n$ and $\omega_{n+1}$
then gives (\ref{eq:MainOddWs}) and completes the proof.
\end{proof}

For the convenience, we summarize the recursion in Algorithm~\ref{algor}.

\begin{algorithm}[!tb]\normalsize \caption{\hfill { {\tt
GaussianQuadrature}$([a,b],n)$} }\label{algor}
\begin{algorithmic}[1]
\medskip
\STATE \textbf{INPUT}: compact interval $[a,b]$ and
number of uniform segments $n$\\
\vspace{-0.35cm}
 \begin{tabular}{c}
  \makebox[11cm]{}\\ \hline
 \end{tabular}
\vspace{0.15cm}
\STATE $A_1=\frac{1}{24}$; $B_1=\frac{1}{8}$;
\FOR{$k=1$ to $[n/2]$}
\STATE compute $\tau_{2k-1}$, $\tau_{2k}$ from (\ref{eq:MainQuadratureNs}),
and $\omega_{2k-1}$, $\omega_{2k}$ from (\ref{eq:MainQuadratureWs});
\ENDFOR
\STATE $\tau_{n+1} = (a+b)/2$; \hfill {\tt /*
middle node */}
\IF{$n$ is even}
\STATE compute $\omega_{n+1}$ from (\ref{eq:MainEven});
\ELSE
\STATE compute $\tau_n$ and $\tau_{n+2}$ from (\ref{eq:MainOddNs}) and $\omega_{n}$, $\omega_{n+1}$ and $\omega_{n+2}$ from (\ref{eq:MainOddWs});
\ENDIF
\FOR{$k=1$ to $[n/2]$} %
\STATE $\tau_{2n-2k+3} = \tau_{2k-1}$; \quad $\tau_{2n-2k+2} = \tau_{2k}$; \hfill {\tt /*symmetry */}
\STATE $\omega_{2n-2k+3} = \omega_{2k-1}$; \quad $\omega_{2n-2k+2} = \omega_{2k}$;
\ENDFOR
\STATE \textbf{OUTPUT}: $\{\tau_i,\omega_i\}_{i=1}^{2n+1}$, set of nodes and weights
of the Gaussian quadrature on interval $[a,b]$;
\end{algorithmic} \end{algorithm}

\section{Error estimation for the $C^1$ quintic splines quadrature rule}\label{sec:err}

In the previous section, we have derived a quadrature rule that exactly
integrates functions from $S^{n}_{5,1}$ with uniform knot sequences.
If $f$ is not an element of $S^{n}_{5,1}$, the rule produces a certain error, also known as \emph{remainder}.
The analysis of this error is the objective of this section.

Let $W_{1}^{r} = \{ f \in C^{r-1}[a,b]; \; f^{(r-1)} \textnormal{abs. cont.}, \;   ||f^{(r)}||_{L_1} < \infty \}$.
As the quadrature rule (\ref{quadrature}) is exact for polynomials of degree at most five,
for any element $f \in W_{1}^{d}$, $d \geq 6$, we have
\begin{equation*}
R_{2n+1}[f] :=  I[f] - \Q_a^b[f] = \int_{a}^{b} K_6(R_{2n+1};t) f^{(6)}(t) dt,
\end{equation*}
where the Peano kernel \cite{Gautschi-1997} is given by
\begin{equation*}
 K_6(R_{2n+1};t) = R_{2n+1} \left[ \frac{(t-.)_{+}^{5}}{5!} \right].
\end{equation*}
An explicit representation for the Peano kernel over the interval $[a,b]$ in terms of the
weights and nodes of the quadrature rule (\ref{quadrature})  is given by
\begin{equation}\label{eq:PeanoKernel}
 K_6(R_{2n+1};t) = \frac{(t-a)^6}{720} - \frac{1}{120} \sum_{k=1}^{2n+1} \omega_k (t - \tau_k)_{+}^{5}.
\end{equation}
Moreover, according to a general result for monosplines and quadrature rules \cite{Micchelli-1977}, the only zeros
of the Peano kernel over $(a,b)$ are the knots of multiplicity four of the quintic spline.
Therefore, for any $t \in (a,b)$,
$K_6(R_{2n+1};t) \geq 0$ and, by the mean value theorem for integration, there exists a real number $\xi \in [a,b]$ such that
for $f\in C^6[a,b]$
\begin{equation}\label{remainder}
R_{2n+1}(f) = c_{2n+1,6}  f^{(6)}(\xi) \quad  \textnormal{with } \quad  c_{2n+1,6} =  \int_{a}^{b} K_{6}(R_{2n+1}; t) dt.
\end{equation}
Hence, the constant $c_{2n+1,6}$ of the remainder $R_{2n+1}$ is always positive and our quadrature rule belongs to the
family of positive definite quadratures of order $6$, e.g., see \cite{Nikolov-1996, Nikolov-1995, Schmeisser-1972}.
Integration of (\ref{eq:PeanoKernel}) gives
\begin{theorem}
The error constant $c_{2n+1,6}$ in (\ref{remainder}) of the quadrature rule (\ref{quadrature}) is given by
\begin{equation}\label{eq:c}
c_{2n+1,6} =  \frac{(b-a)^7}{5040} - \frac{1}{720} \sum_{k=1}^{2n+1} \omega_k (\tau_{k} - a)^6.
\end{equation}
\end{theorem}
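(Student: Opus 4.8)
The plan is to start from the explicit Peano kernel representation (\ref{eq:PeanoKernel}) and integrate it term by term over $[a,b]$, since $c_{2n+1,6}=\int_a^b K_6(R_{2n+1};t)\,\de t$ by (\ref{remainder}). The first summand $\frac{(t-a)^6}{720}$ integrates to $\frac{(b-a)^7}{5040}$, which is exactly the leading term claimed in (\ref{eq:c}). It remains only to handle the sum.

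For the sum, I would interchange the integral and the finite sum (permissible, as it is a finite sum of integrable functions) to get
\begin{equation*}
\frac{1}{120}\sum_{k=1}^{2n+1}\omega_k\int_a^b (t-\tau_k)_+^5\,\de t.
\end{equation*}
Since each node $\tau_k$ lies in $(a,b)$, the truncated power $(t-\tau_k)_+^5$ vanishes on $[a,\tau_k]$ and equals $(t-\tau_k)^5$ on $[\tau_k,b]$, so
\begin{equation*}
\int_a^b (t-\tau_k)_+^5\,\de t=\int_{\tau_k}^b (t-\tau_k)^5\,\de t=\frac{(b-\tau_k)^6}{6}.
\end{equation*}
Substituting this back, the sum contributes $\frac{1}{720}\sum_{k=1}^{2n+1}\omega_k (b-\tau_k)^6$, giving an expression with $(b-\tau_k)^6$ rather than the $(\tau_k-a)^6$ that appears in the statement.

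The last step is to reconcile $(b-\tau_k)^6$ with $(\tau_k-a)^6$. This is where the symmetry of the rule is used: by Corollary~\ref{corollary1} and the construction in Theorem~\ref{theo:Quadrature} (the reindexing $\tau_{2n-2k+3}=\tau_{2k-1}$, etc.\ in Algorithm~\ref{algor}), the node set and the weight set are symmetric about the midpoint $(a+b)/2$, i.e.\ the map $\tau_k\mapsto a+b-\tau_k$ permutes the nodes and carries the corresponding weights to each other. Hence $\sum_k \omega_k (b-\tau_k)^6 = \sum_k \omega_k\big(b-(a+b-\tau_k)\big)^6 = \sum_k \omega_k (\tau_k-a)^6$, and the two forms agree. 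Alternatively, one can integrate the kernel ``from the right'' by writing $K_6$ in terms of $(b-t)^6$ and $(\tau_k-t)_+^5$, which is symmetric to (\ref{eq:PeanoKernel}); either route yields (\ref{eq:c}).

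No real obstacle is expected here: the argument is a one-line integration of the truncated power plus an invocation of the already-established symmetry of the quadrature rule. The only point requiring a word of care is making explicit that the nodes lie strictly inside $(a,b)$ so that the lower limit of integration is $\tau_k$ rather than $a$, and that the rule's symmetry (established in Section~\ref{ssec:quad}) licenses replacing $(b-\tau_k)^6$ by $(\tau_k-a)^6$.
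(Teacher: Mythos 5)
Your proposal is correct and follows the same route as the paper, whose entire proof is the single phrase ``Integration of (\ref{eq:PeanoKernel}) gives'': you integrate the Peano kernel term by term, obtaining $\frac{(b-a)^7}{5040}-\frac{1}{720}\sum_k\omega_k(b-\tau_k)^6$. Your extra step---invoking the symmetry of the nodes and weights about $(a+b)/2$ to replace $(b-\tau_k)^6$ by $(\tau_k-a)^6$---is a genuine detail the paper silently omits, and it is needed for the formula exactly as stated in (\ref{eq:c}), so it is a welcome addition rather than a detour.
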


\section{Numerical Experiments}\label{sec:ex}

\begin{figure}[!tb]
\vrule width0pt\hfill
 \begin{overpic}[width=.49\columnwidth,angle=0]{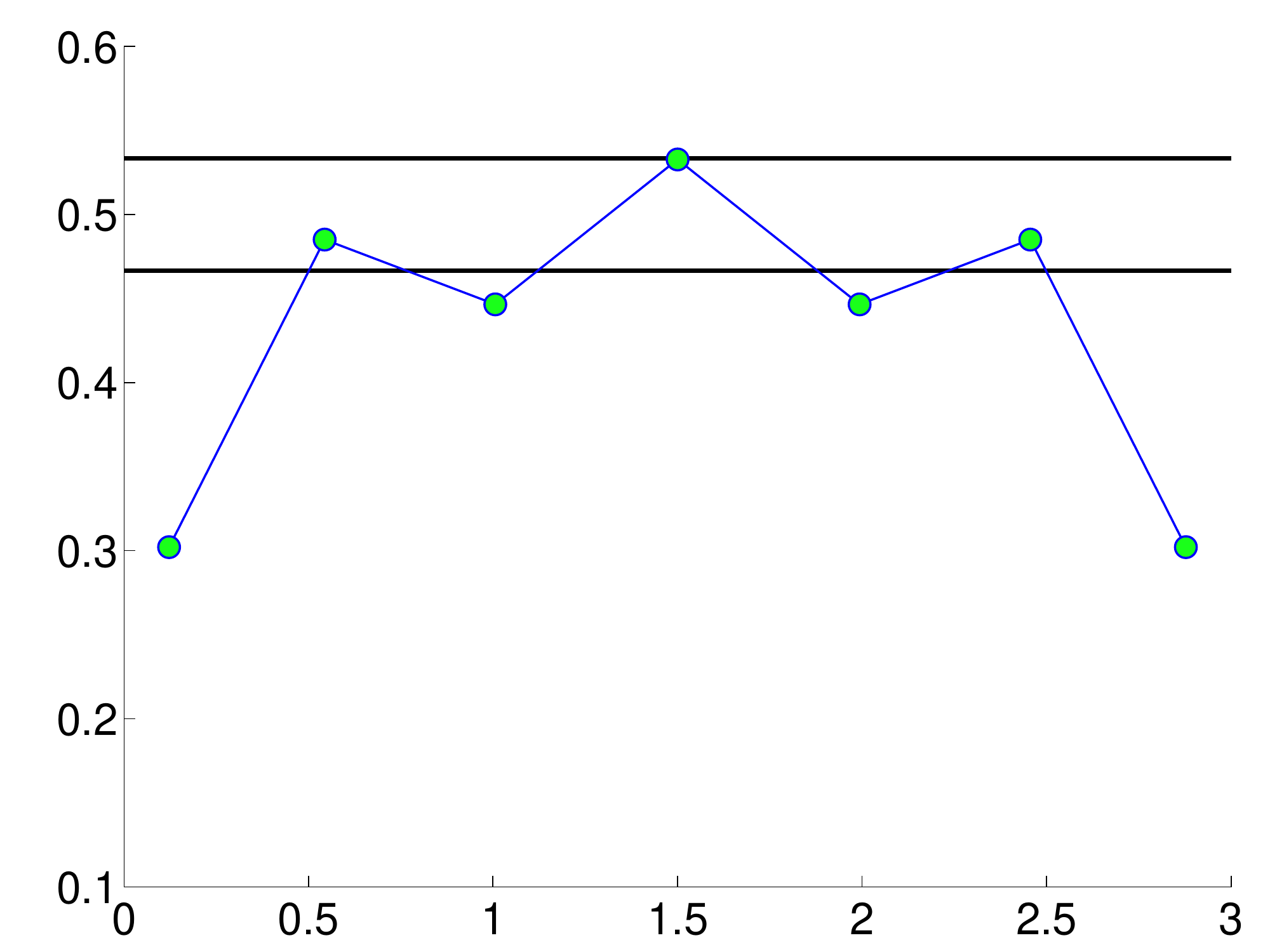}
   \put(50,80){\fcolorbox{gray}{white}{odd}}
   \put(13,68){\fcolorbox{gray}{white}{\small $n=3$}}
	\end{overpic}
 \hfill
 \begin{overpic}[width=.49\columnwidth,angle=0]{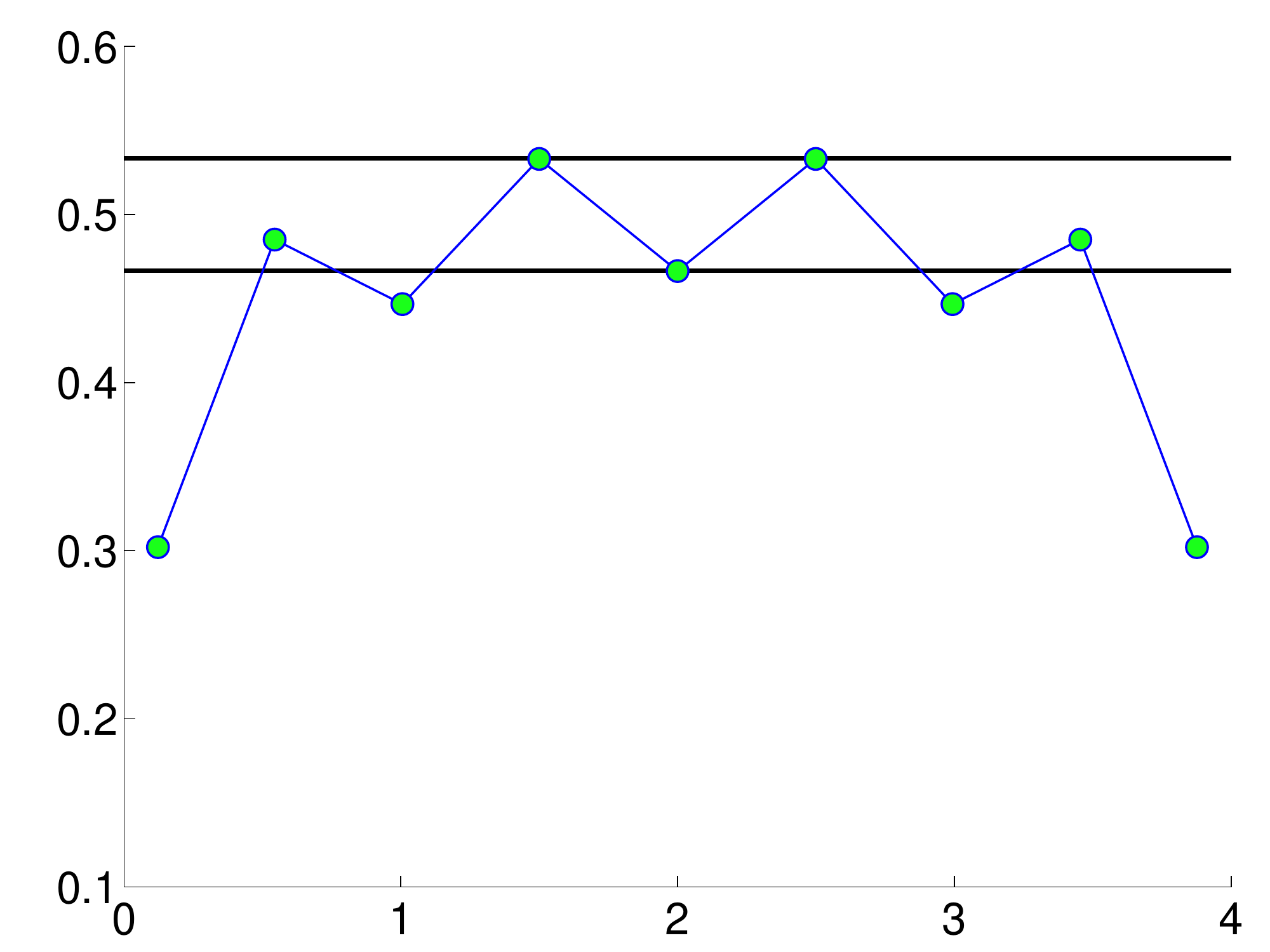}
 \put(50,80){\fcolorbox{gray}{white}{even}}
 \put(13,68){\fcolorbox{gray}{white}{\small $n=4$}}
    \put(25,45){$[\tau_3,\omega_3]$}
    \put(35,66){$[\tau_4,\omega_4]$}
	\end{overpic}\hfill \vrule width0pt\\[1ex]
\vrule width0pt\hfill
 \begin{overpic}[width=.49\columnwidth,angle=0]{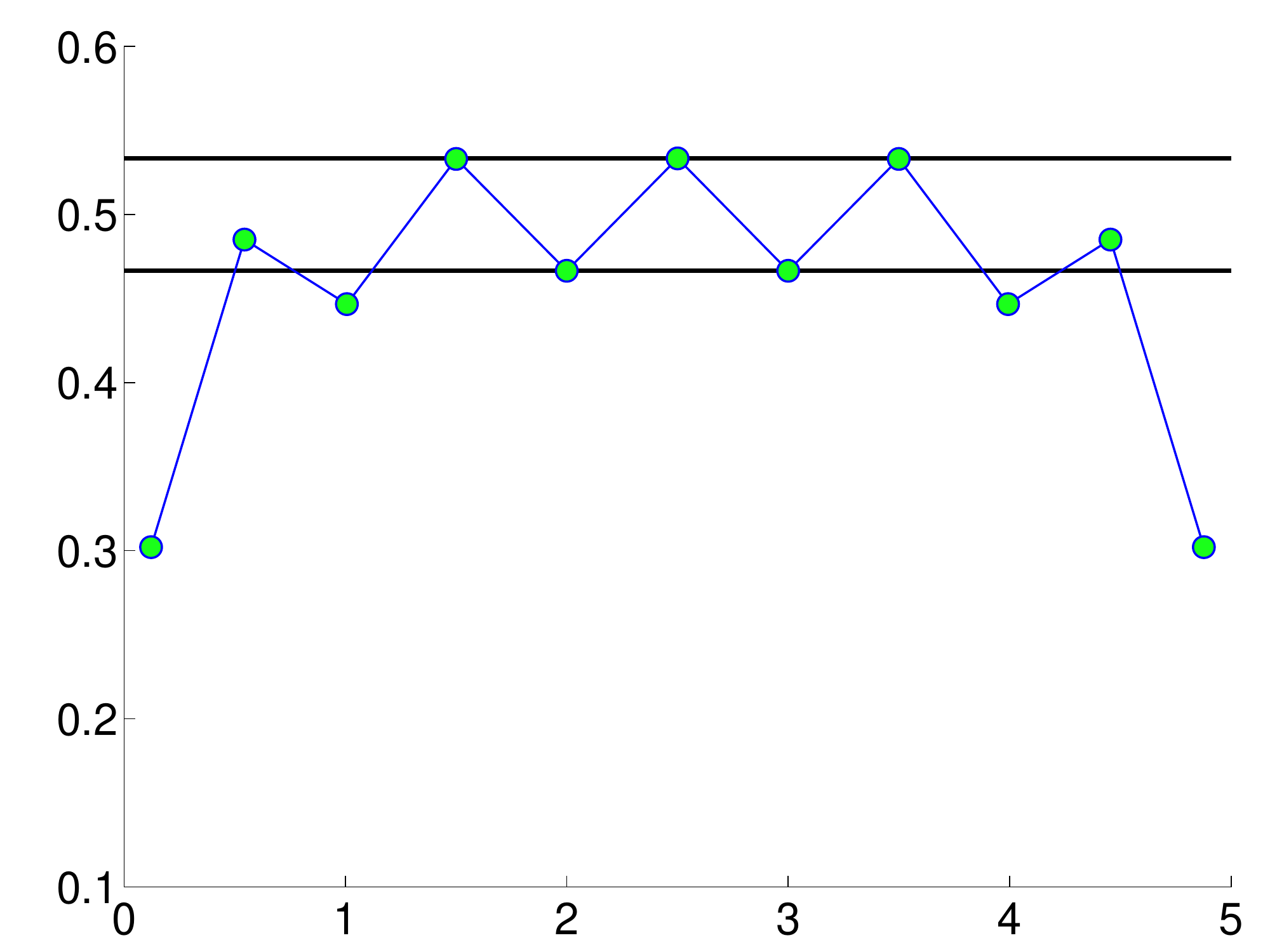}
 \put(13,68){\fcolorbox{gray}{white}{\small $n=5$}}
 \put(55,-10){$\vdots$}
	\end{overpic}
 \hfill
 \begin{overpic}[width=.49\columnwidth,angle=0]{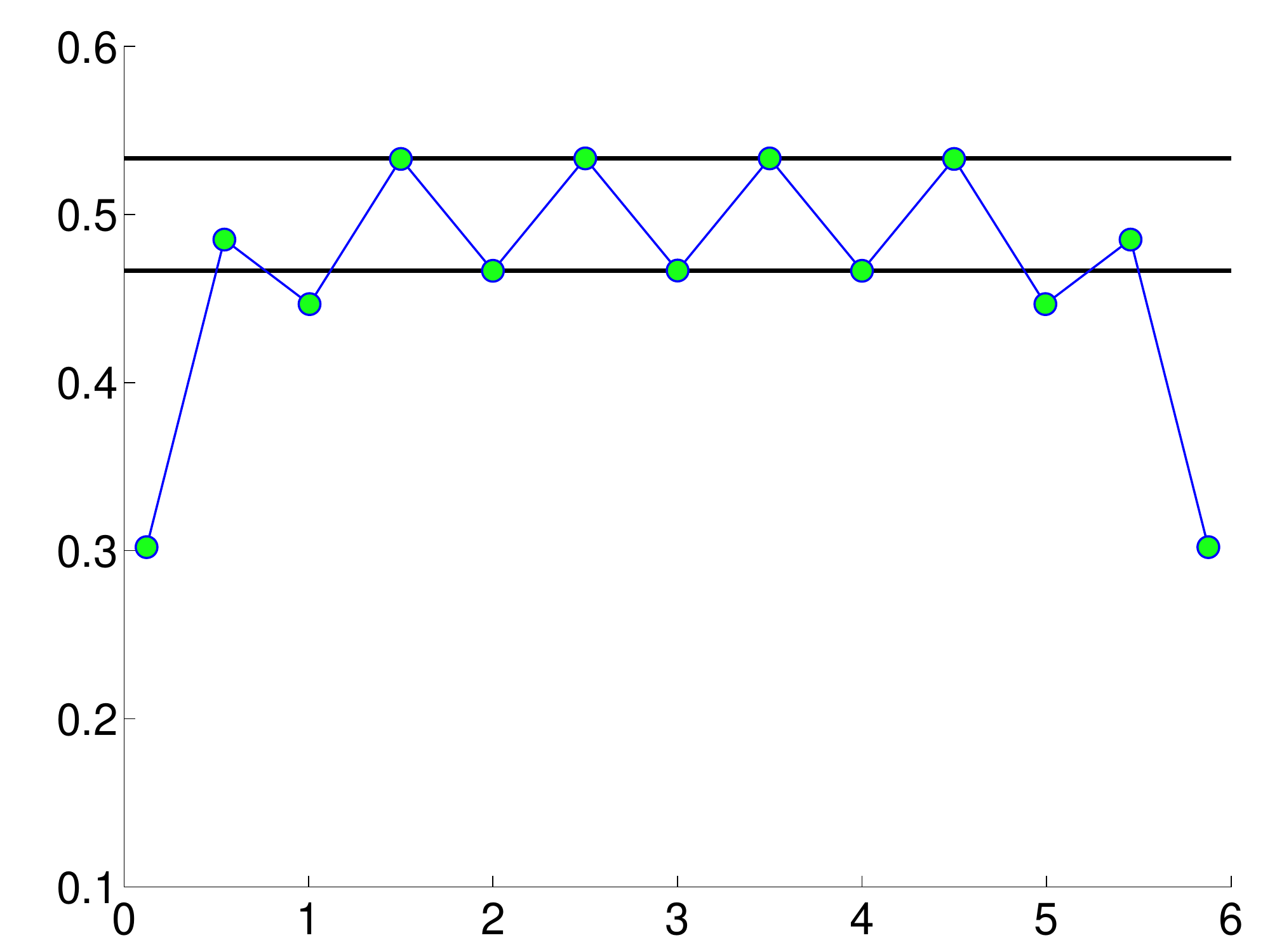}
 \put(13,68){\fcolorbox{gray}{white}{\small $n=6$}}
 \put(55,-10){$\vdots$}
	\end{overpic}\hfill \vrule width0pt\\[4ex]
\vrule width0pt\hfill
 \begin{overpic}[width=.49\columnwidth,angle=0]{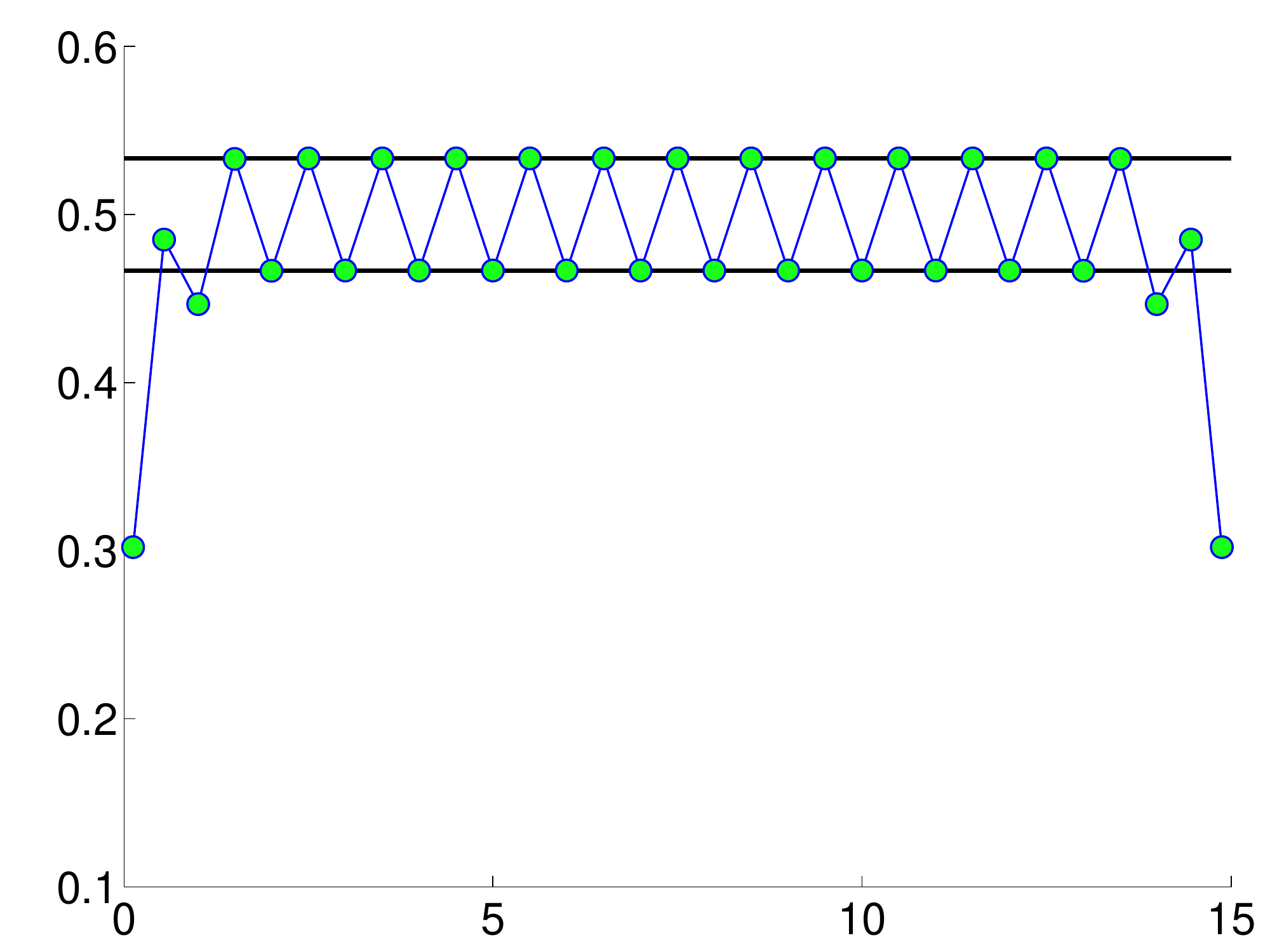}
 \put(13,68){\fcolorbox{gray}{white}{\small $n=15$}}
	\end{overpic}
 \hfill
 \begin{overpic}[width=.49\columnwidth,angle=0]{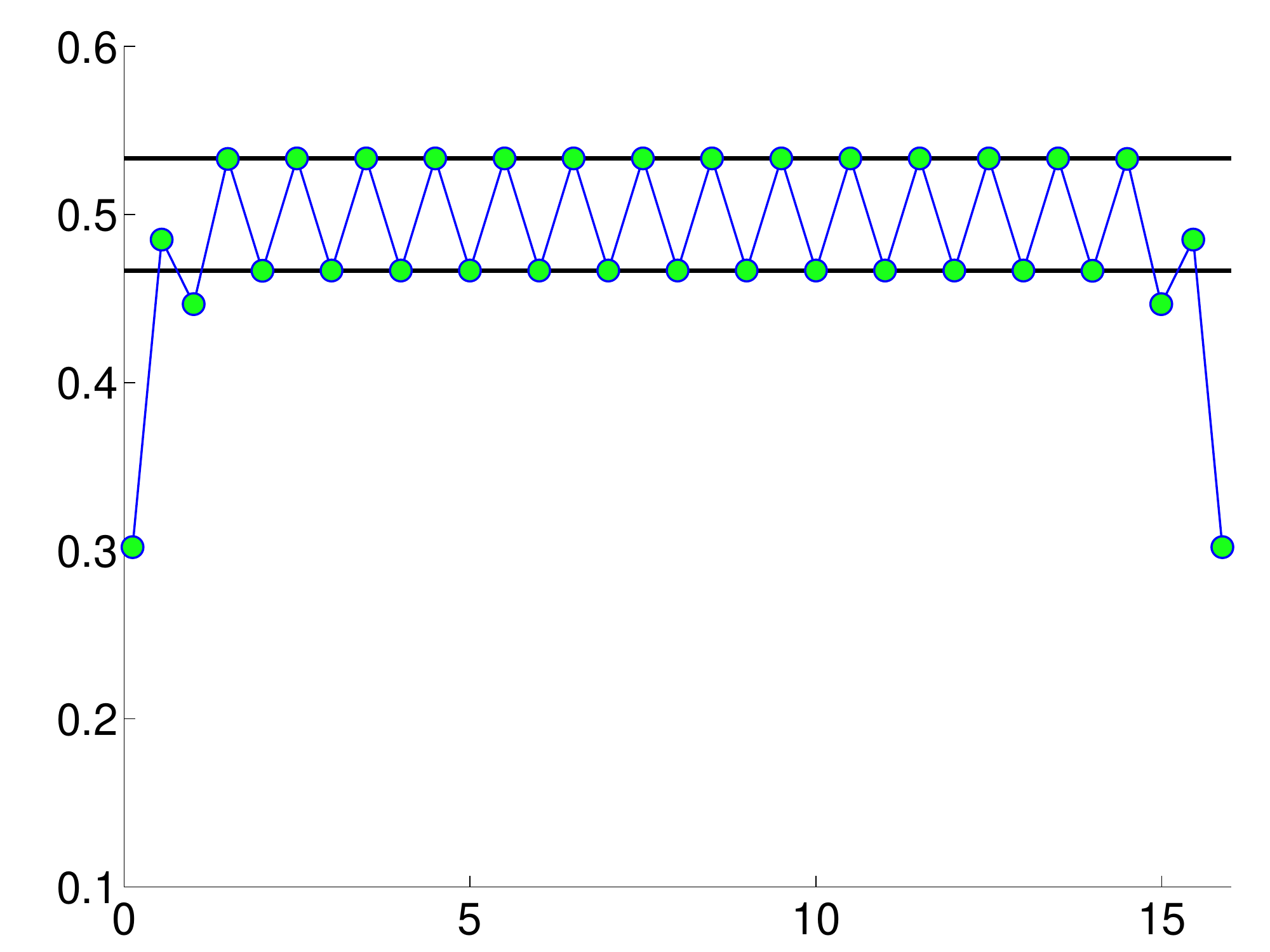}
 \put(13,68){\fcolorbox{gray}{white}{\small $n=16$}}
	\end{overpic}\hfill \vrule width0pt\\
 \vspace{-15pt}
\Acaption{1ex}{The quadrature rules (\ref{quadrature}) for various $n$ are shown.
The interval is set $[a,b] = [0,n]$, i.e., the distance between the neighboring knots is normalized to $h=1$.
The green dots visualize the quadrature; the $x$-coordinates are the nodes and the $y$-coordinates the corresponding weights.
As $n \rightarrow \infty$, the nodes converge to the knots and the midpoints of the subintervals, the weights converge to $0.4\overline{6}$
and $0.5\overline{3}$ (black lines), cf. Table~\ref{tabW} and (\ref{eq:limit}).}\label{fig:BasisNsWs}
 \end{figure}

In this section, we show some examples of quadrature nodes and weights
for particular numbers of subintervals and discuss the asymptotic behaviour
of the rule for $n \rightarrow \infty$.

In the case when the domain is the whole real line, the exact and optimal
rule is easy to compute. Similarly to \cite{Hughes-2010}, Eq.(29), where the rule
was derived for $S_{4,1}$, for $S_{5,1}$ case one obtains
\begin{equation}\label{eq:limit}
\int_{\mathbb R}f(t)\, \mathrm{d}t = \sum_{i \in \mathbb Z} h(\frac{7}{15} f(ih) + \frac{8}{15} f(\frac{2i+1}{2}h)),
\end{equation}
that is, the nodes are the knots and the middles of the subintervals. Similarly to \cite{Hughes-2010},
only two evaluations per subinterval are needed which gives $2/3$ cost reduction ratio when
compared to the classical Gaussian quadrature for polynomials.
Observe the convergence of our general uniform rule
to its limit, (\ref{eq:limit}), when $n\rightarrow \infty$. The weights and nodes are shown in Table~\ref{tabW}.
Only few initial nodes and weights differ from the limit values as $\frac{7}{15}=0.4\overline{6}$
and $\frac{8}{15}=0.5\overline{3}$. From Table~\ref{tabW} we conclude that for large values of $n$,
one needs to compute only the first nine nodes and weights that differ from the limit values by more than $\varepsilon = 10^{-16}$.

\begin{table}[!tb]
 \begin{center}
  \begin{minipage}{0.9\textwidth}
\caption{Nodes and weights for Gaussian quadrature (\ref{quadrature}) with double-precision for various $n$ are shown.
To observe the convergence to (\ref{eq:limit}), the interval was set as $[a,b]=[0,n]$.
Due to the symmetry, only the first $n+1$ nodes and weights are displayed.}\label{tabW}
  \end{minipage}
\vspace{0.2cm}\\
\footnotesize{
\renewcommand{\arraystretch}{1.2}
\begin{tabular}{| c || l| l| l| l| l| l| l|}\hline
\rotatebox{0}{}
 & \multicolumn{2}{c|}{$n=5$}
 & \multicolumn{2}{c|}{$n=6$}\\
 $i$ &  \multicolumn{1}{c|}{$\tau_i$}  & \multicolumn{1}{c|}{$\omega_i$} & \multicolumn{1}{c|}{$\tau_i$} & \multicolumn{1}{c|}{$\omega_i$}
\\\hline\hline
 1 & \tf 0.1225148226554413 & \tf 0.3020174288145723 & \tf 0.1225148226554413 & \tf 0.3020174288145723  \\\hline
 2 & \tf 0.5441518440112252 & \tf 0.4850196082224646 & \tf 0.5441518440112252 & \tf 0.4850196082224646 \\\hline
 3 & \tf 1.0064654716056596 & \tf 0.4467177201362911 & \tf 1.0064654716056596 & \tf 0.4467177201362911 \\\hline
 4 & \tf 1.5002730728687338 & \tf 0.3303872093804185 & \tf 1.5002730728687338 & \tf 0.5330387209380418 \\\hline
 5 & \tf 2.0000387957905171 & \tf 0.4665398664562177 & \tf 2.0000387972956304 & \tf 0.4665398713719121 \\\hline
 6 & \tf 2.5                & \tf 0.5333333108648244 & \tf 2.5000000105321137 & \tf 0.5333333220982075 \\\hline
 7 & --                     & --                     & \tf 3                  & \tf 0.4666666568370204 \\\hline\hline
   & \multicolumn{2}{c|}{$n=7$} & \multicolumn{2}{c|}{$n=8$}\\
 1 & \tf 0.1225148226554413 & \tf 0.3020174288145723 & \tf 0.1225148226554413 & \tf 0.3020174288145723 \\\hline
 2 & \tf 0.5441518440112252 & \tf 0.4850196082224646 & \tf 0.5441518440112252 & \tf 0.4850196082224646 \\\hline
 3 & \tf 1.0064654716056596 & \tf 0.4467177201362911 & \tf 1.0064654716056596 & \tf 0.4467177201362911 \\\hline
 4 & \tf 1.5002730728687338 & \tf 0.5330387209380418 & \tf 1.5002730728687338 & \tf 0.5330387209380418 \\\hline
 5 & \tf 2.0000387972956304 & \tf 0.4665398713719121 & \tf 2.0000387972956304 & \tf 0.4665398713719121 \\\hline
 6 & \tf 2.5000000105321137 & \tf 0.5333333220982075 & \tf 2.5000000105321137 & \tf 0.5333333220982075 \\\hline
 7 & \tf 3.3.00000000150452 & \tf 0.4666666617518435 & \tf 3.0000000015045293 & \tf 0.4666666617518435 \\\hline
 8 & \tf 3.5                & \tf 0.5333333333333333 & \tf 3.5000000000000000 & \tf 0.5333333333333333 \\\hline
 9 & --                     & --                     & \tf 4                  & \tf 0.4666666666666665\\\hline\hline
    & \multicolumn{2}{c|}{$n=9$} & \multicolumn{2}{c|}{$n=10$}\\
 1 & \tf 0.1225148226554413 & \tf 0.3020174288145723 & \tf 0.1225148226554413 & \tf 0.3020174288145723 \\\hline
 2 & \tf 0.5441518440112252 & \tf 0.4850196082224646 & \tf 0.5441518440112252 & \tf 0.4850196082224646 \\\hline
 3 & \tf 1.0064654716056596 & \tf 0.4467177201362911 & \tf 1.0064654716056596 & \tf 0.4467177201362911 \\\hline
 4 & \tf 1.5002730728687338 & \tf 0.5330387209380418 & \tf 1.5002730728687338 & \tf 0.5330387209380418 \\\hline
 5 & \tf 2.0000387972956304 & \tf 0.4665398713719121 & \tf 2.0000387972956304 & \tf 0.4665398713719121 \\\hline
 6 & \tf 2.5000000105321137 & \tf 0.5333333220982075 & \tf 2.5000000105321137 & \tf 0.5333333220982075 \\\hline
 7 & \tf 3.0000000015045293 & \tf 0.4666666617518435 & \tf 3.0000000015045293 & \tf 0.4666666617518435 \\\hline
 8 & \tf 3.5000000000000000 & \tf 0.5333333333333333 & \tf 3.5000000000000000 & \tf 0.5333333333333333 \\\hline
 9 & \tf 4.0000000000000000 & \tf 0.4666666666666666 & \tf 4.0000000000000000 & \tf 0.4666666666666665 \\\hline
10 & \tf 4.5                & \tf 0.5333333333333333 & \tf 4.5000000000000000 & \tf 0.5333333333333333 \\\hline
11 & --                     & --                     & \tf 5.0000000000000000 & \tf 0.4666666666666666 \\\hline
\end{tabular}
}
\end{center}
\end{table}

\section{Conclusion and future work}\label{sec:con} %

We have presented a recursive algorithm that computes quadrature nodes and weights
for spaces of quintic splines with uniform knot sequences over finite domains.
The presented quadrature is explicit, that is,
in every step of the recursion the new nodes and weights are computed in closed form,
without using a numerical solver. The number of nodes per subinterval is two
and hence the cost reduction compared to the classical Gaussian quadrature for polynomials is $2/3$.
We have also shown numerically that in the limit, when the length of the interval goes to infinity,
our rule converges to the ``two-third rule'' of Hughes et al. \cite{Hughes-2010}
that is known to be exact and optimal over the real line.

For $C^1$ splines, our quadrature rule is optimal (Gaussian), that is, it requires minimal number of evaluations.
However, it is still exact for any quintic splines
with higher continuity and therefore we believe it can be used for various engineering applications.

As a future work, optimal rules for spaces of higher polynomial degree and primarily of higher continuity
are within the scope of our interest. Since the higher continuity reduces the number of optimal nodes,
their layout in subintervals is difficult to assign and hence finding these is still an open problem.


\bibliographystyle{plain}
\bibliography{QuinticC1QuadratureArXiv}

\section*{Appendix}


\begin{proof}[Proof of Lemma~\ref{lem:NoRoot}]
Without loss of generality, we may assume $h=1$ as the roots of $C_k$ change with scaling factor $h$, cf. (\ref{eq:cubic}).
The cubic $C_k$ can be split into two summands $f_k$ and $g_k$, the first independent and the latter dependent on $r_{4k-3}$
and $r_{4k-2}$, 
i.e.
\begin{equation}\label{eq:summands}
\begin{split}
f_k & = 2t^3 -5t^2 +4t - 1 = (t-1)^2 (t-\frac{1}{2}),\\
g_k & = (-216r_{4k-3} - 24r_{4k-2})t^3 + (24r_{4k-2} -24r_{4k-3})t^2 \\
    & = -24t^2(t- \frac{r_{4k-2} - r_{4k-3}}{9r_{4k-3} + r_{4k-2}}) 
\end{split}
\end{equation}
We show that $C_k(t) < 0$ on $[0,1]$. We denote by $\xi_g$ the non-zero root of $g_k$,
$$\xi_g = \frac{r_{4k-2} - r_{4k-3}}{9r_{4k-3} + r_{4k-2}},$$
and consider the particular subdivision of $[0,1]$ into 1) $[0,\xi_g]$,
2) $[\xi_g, \frac{1}{2}]$, and 3) $[\frac{1}{2},1]$.
We investigate $C_k$ on each of these three subintervals separately:

Case 3) We express $C_k$ in Bernstein (B) basis and show all its coefficients are negative.
Let $T_{[\frac{1}{2},1]}$ be the transformation matrix \cite{Hoschek-2002-CAGD} from monomial to Bernstein basis on $[\frac{1}{2},1]$
\begin{equation}\label{eq:Matrix}
\renewcommand{\arraystretch}{1.2}
 T_{[\frac{1}{2},1]}
  =  \left(\begin{matrix}
      1 & 1 & 1 & 1\\
      \frac{1}{2}& \frac{2}{3}& \frac{5}{6} & 1  \\
      \frac{1}{4}& \frac{5}{12}& \frac{2}{3} & 1  \\
      \frac{1}{8}& \frac{1}{4}& \frac{1}{2} & 1  \\
   \end{matrix}\right),
\end{equation}
and let $\bc_B^k = (c_0^B, c_0^B, c_0^B, c_0^B)$ be the vector of Bernstein coefficients.
Then the conversion is given by $\bc^{k}_B = \bc^{k}_{mo}  T_{[\frac{1}{2},1]}$
and we obtain
%
\begin{equation}\label{eq:BBcoords}
\begin{split}
& c^B_0 = -33r_{4k-3}+3r_{4k-2},\\
& c^B_1 = \frac{1}{12} -64r_{4k-3}+4r_{4k-2},\\
& c^B_2 =  -124r_{4k-3}+4r_{4k-2},\\
& c^B_3 =  -240r_{4k-3}.
\end{split}
\end{equation}
Looking at the $c_i^B$ coefficients, we start with the second one and prove that
$64r_{4k-3} - 4r_{4k-2} > \frac{1}{12}$. We know that $r_{4k-3}$
and $r_{4k-2}$ are by definition both positive and also $r_{4k-2}>r_{4k-3}$,
which is a direct consequence of $D_{4k+1}(t) > D_{4k+2}(t)$ on $(x_{k-1},x_k)$,
and also $r_{4k-3},r_{4k-2}<\frac{1}{6}$.
Consider the blend $P_k(t)$ from Lemma~\ref{lemmaBlend}. $P_k(t)\geq0$
gives polynomial inequality $2 D_{4k+1}(t) - 2 D_{4k+2}(t) + \frac{1}{2} D_{4k-1}(t) \geq D_{4k}(t)$.
By evaluating both sides at the two nodes and by multiplying with corresponding weights, i.e., by applying the
quadrature rule $\Q$ on $[x_{k-1},x_k]$, we obtain
\begin{equation}\label{eq:ruleonQ}
2(4r_{4k-3} - r_{4k-2}) + \frac{1}{12} \geq \frac{1}{6}
\end{equation}
because $D_{4k-1}$ and $D_{4k}$ act only on this interval and hence the rule reproduces their
integrals exactly. Combining (\ref{eq:ruleonQ}) with $r_{4k-3}>0$ proves the desired inequality.
Moreover, combining (\ref{eq:ruleonQ}) with $\frac{1}{6}>r_{4k-2}$ gives
\begin{equation}\label{eq:r1r2}
16r_{4k-3} > 5r_{4k-2},
\end{equation}
and the other three inequalities follow directly from (\ref{eq:r1r2}) and the fact $r_{4k-3}>0$.

Case 1) Similarly to case 3), we compute the $c_i^B$ coefficients and show that all are negative.
The conversion is given by $\bc^k_B = \bc^k_{mo}  T_{[0,\xi_g]}$, where
\begin{equation}\label{eq:Matrix2}
\renewcommand{\arraystretch}{1.2}
 T_{[0,\xi_g]}
  =  \left(\begin{matrix}
      1 & 1 & 1 & 1\\
      0 & \frac{1}{3}\xi_g &  \frac{1}{2}\xi_g & \xi_g  \\
      0 & 0   &  \frac{1}{3}\xi_g^2 & \xi_g^2  \\
      0 & 0   & 0     & \xi_g^3  \\
   \end{matrix}\right)
\end{equation}
is the corresponding transformation matrix. We obtain $\bc^{k}_B = (c_0^B,c_1^B,c_2^B,c_3^B)$
\begin{equation}\label{eq:cubic2}
\begin{split}
& c^B_0 = -1,\\
& c^B_1 = \frac{-31r_{4k-3} + r_{4k-2}}{3(9r_{4k-3} + r_{4k-2})},\\
& c^B_2 = -\frac{4}{3}\frac{80r_{4k-3}^2 - 5r_{4k-3}r_{4k-2} + 6(r_{4k-3}-r_{4k-2})^3}{(9r_{4k-3} + r_{4k-2})^2},\\
& c^B_3 = \frac{-100r_{4k-3}^2 (11r_{4k-3} - r_{4k-2})}{(9r_{4k-3} + r_{4k-2})^3}.
\end{split}
\end{equation}
Negativity of $c^B_1$ and $c^B_3$ follows directly from (\ref{eq:r1r2}). It remains to prove
$$
80r_{4k-3}^2 - 5r_{4k-3}r_{4k-2} + 6(r_{4k-3}-r_{4k-2})^3 > 0,
$$
which using (\ref{eq:r1r2}) and $r_{4k-2}>r_{4k-3}$ simplifies to
$$
64r_{4k-3}^2 > 6r_{4k-2}^3,
$$
which again follows from (\ref{eq:r1r2}) using $r_{4k-2}<\frac{1}{6}$.

Case 2) $f_k$ has a double root at $t=1$ and $g_k$ has a double root at $t=0$.
For the third root, it holds
\begin{equation}\label{eq:root}
\frac{r_{4k-2} - r_{4k-3}}{9r_{4k-3} + r_{4k-2}} < \frac{1}{2}
\end{equation}
which follows from (\ref{eq:r1r2}). From
case 3), we know $g_k(\frac{1}{2}) = -33r_{4k-3} + 3r_{4k-2}$ and from case 1) $f_k(\xi_g)=c_3^B$
that are both negative.
Moreover, we know that $f_k$ is monotonically increasing while $g_k$ is monotonically
decreasing on $(\xi_g, \frac{1}{2})$. 
Therefore $f_k(t)<0$ on $[\xi_g, \frac{1}{2})$
and $g_k<0$ on $(\xi_g, \frac{1}{2}]$ which completes the proof.
\end{proof}


\end{document}